\documentclass[10pt]{article}
\usepackage{amsmath,amsfonts,amsthm,amssymb,stmaryrd,mathrsfs}
\usepackage{enumerate, xspace, graphicx, url, microtype}
\usepackage[utf8]{inputenc}
\usepackage[T1]{fontenc}
\usepackage{hyperref}
\usepackage{tikz-cd}
\usepackage{rotating}
\newcommand*{\isoarrow}[1]{\arrow[#1,"\rotatebox{90}{\(\sim\)}"
]}
\usepackage[
    backend=biber,
    style=alphabetic,
    natbib=true,
    url=true, 
    doi=false,
    eprint=true
]{biblatex}
\addbibresource{frobenius_descent_torsors.bib}

\newtheorem{theorem}{Theorem}[section]
\newtheorem{proposition}[theorem]{Proposition}
\newtheorem{corollary}[theorem]{Corollary}
\newtheorem{lemma}[theorem]{Lemma}

\newtheorem{theoremA}{Theorem}

\newtheorem{corollaryA}[theoremA]{Corollary}

\theoremstyle{definition}
\newtheorem{definition}[theorem]{Definition}

\newtheorem{remark}[theorem]{Remark}

\theoremstyle{remark}

\DeclareMathOperator{\GL}{GL} 
\DeclareMathOperator{\spec}{spec} 
\DeclareMathOperator{\Lie}{Lie} 
\DeclareMathOperator{\ad}{ad} 
 
\DeclareMathOperator{\DER}{{\mathcal D}er}
\DeclareMathOperator{\id}{id} 
\DeclareMathOperator{\pr}{pr}

\DeclareMathOperator{\At}{At}

\DeclareMathOperator{\Aut}{Aut}

\DeclareMathOperator{\B}{B} 
\DeclareMathOperator{\h}{H} 
\DeclareMathOperator{\car}{car} 
\DeclareMathOperator{\Z}{Z} 
\DeclareMathOperator{\C}{\mathscr C}

\newcommand{\dd}{\mathrm d}

\begin{document}
\title{Some consequences of \\ Frobenius descent for torsors}
\author{Niels Borne\thanks{Niels.Borne@univ-lille.fr} \and Mohamed Rafik Mammeri\thanks{mammeri.mohamed@gmail.com}}
\maketitle
\begin{abstract} We show how the formalism of Frobenius descent for torsors enables to study torsors under Frobenius kernels in terms of non-commutative, Lie-valued differential forms. We pay particular attention to affine line bundles trivialized by the Frobenius. \end{abstract}

\section{Introduction}%
\label{sec:Introduction}

Let $k$ be a field of positive characteristic $p$, that in this introduction we will assume perfect. Let $X/k$ be a smooth scheme, and denote by $F_X:X\to X$ the Frobenius morphism.

\subsection{The prototype}
\label{sub:prototype}
It is well known that the set of isomorphism classes of $\mathbb \mu_p$-torsors  on $X$ for the fppf topology is in one to one correspondence with the set of global $1$-differentials forms on $X$ that are closed and locally logarithmic.

It is fairly easy to give a rough idea of the correspondence: given such a form $\omega$, verifying locally $\omega =\dd f /f$, where $f$ is a unit, one defines a $\mu_p$-torsor locally by the Kummer equation $x^p=f$. Since two such $f$ differ by the $p$-th power of a \emph{unique} unit, the global form $\omega$ enables to glue the local torsors.

The actual proof relies on a delicate comparison of cohomologies on the étale and on the flat site, see \cite[Corollary 4.14]{milneEtaleCohomology1980}. 

There is a parallel characterization of   the set of isomorphism classes of $\mathbb \alpha_p$-torsors  on $X$ in terms of $1$-differentials forms on $X$ that are closed and locally exact. Moreover, these two cases have been generalized by Artin and Milne (see \cite{artin-milne:duality}) who give a description of $H^1(X,A)$ in terms of suitable Lie-valued differential forms if $A/k$ is an arbitrary abelian group scheme of height $1$.

\subsection{Frobenius descent enters the scene}%
\label{sub:Cartier descent enters the scene}

Our main point is that one can avoid the use of cohomology altogether and reinterpret the correspondence between $\mathbb \mu_p$-torsors and closed and locally logarithmic differential forms in terms of descent along the Frobenius. Namely, Cartier's Frobenius descent for vector bundles ensures that if $\mathcal E$ is a locally free sheaf on $X$, then $F_X^*\mathcal E$ is endowed with a canonical connection $\nabla_{can}$, whose moreover  both curvature $C$ and $p$-curvature $\psi$ vanish. And conversely, it is possible, in an essentially unique way, to descend such a vector bundle equipped with a suitable connection along $F_X:X\to X$.

If now $\omega$ is a closed $1$-form on $X$, and $\C$ is the Cartier operator, the condition $\C \omega = \omega$ not only characterizes the fact that $\omega$ is locally logarithmic, but also the fact that the connection $\mathrm{d}+\omega$ on the structure sheaf $\mathcal O_X$ is of vanishing $p$-curvature. By Frobenius descent, $(\mathcal O_X,\mathrm{d}+\omega)$ descends to an invertible sheaf $\mathcal L$ such that $F_X^*\mathcal L \simeq \mathcal O_X$. Since there is a natural isomorphism $F_X^*\mathcal L \simeq \mathcal L^{\otimes p}$, this gives rise to a $\mu_p$-torsor on $X$, as expected.

As Frobenius descent has been generalized from $G=\GL_n$ to $G$-torsors, where $G/k$ is a smooth affine group scheme (see \cite{chen-zhu:non_abelian, mammeriDescenteCartierTorseurs2016} and Theorem \ref{thm:cartier-descent-torsors}) the merits of this geometric approach are clear. 

\subsection{Main results}%
\label{sub:Main results}

To illustrate the usefulness of Frobenius descent for $G$-torsors, we work out completely the simplest non-commutative case, that is when the structure group is $G= \mathbb G_a \ltimes \mathbb G_m$:

\begin{corollaryA}[Corollary \ref{cor:Gm_rt_Ga_vanishing_p-curvature}]
	\label{cor:Gm_rt_Ga_vanishing_p-curvatureA}
Assume that $X/k$ is smooth, and that $k$ is perfect. Then the set of isomorphism classes of affine line bundles on $X$ trivialized along the Frobenius is in one to one correspondence with the set of pairs $(\omega,\omega')$ of $1$-forms on $X$ such that :
\begin{enumerate}
	\item $\mathrm{d} \omega =0 = \mathrm{d} \omega' +\omega\wedge \omega'$,
	\item $\C \omega = \omega$,
	\item for each Zariski open $U\subset X$ and each $f\in \mathbb G_m(U)$ such that $\omega_{|U}=\mathrm{d}f/f$, the equality $\C (f\omega'_{|U})=0$ holds.
\end{enumerate}
\end{corollaryA}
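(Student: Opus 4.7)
My approach is to apply Theorem~\ref{thm:cartier-descent-torsors} (Frobenius descent for torsors) to $G = \mathbb{G}_a \rtimes \mathbb{G}_m$. By descent, isomorphism classes of affine line bundles $P$ on $X$ together with a trivialization of $F_X^*P$ are in bijection with $\Lie(G)$-valued $1$-forms $\Omega$ on $X$ satisfying the Maurer--Cartan equation and the vanishing $p$-curvature condition. Embedding $G$ into $\GL_2$ via $(g, f) \mapsto \bigl(\begin{smallmatrix} f & g \\ 0 & 1 \end{smallmatrix}\bigr)$, I can write $\Omega = \bigl(\begin{smallmatrix} \omega & \omega' \\ 0 & 0 \end{smallmatrix}\bigr)$, identifying the datum of $\Omega$ with that of a pair of $1$-forms $(\omega, \omega')$.

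Condition~(1) then falls out of the matrix computation of $\mathrm{d}\Omega + \Omega \wedge \Omega$: the diagonal entry is $\mathrm{d}\omega$ and the $(1,2)$-entry is $\mathrm{d}\omega' + \omega \wedge \omega'$, both of which must vanish. Condition~(2) follows by functoriality of Frobenius descent applied to the projection $G \to \mathbb{G}_m$: the induced line bundle is trivialized by Frobenius with connection form $\omega$, and exactly as in the prototypical $\mu_p$ case recalled in Section~\ref{sub:prototype}, vanishing $p$-curvature is equivalent to $\C\omega = \omega$. For condition~(3), I argue locally: on any $U \subset X$ where $\omega_{|U} = \mathrm{d}f/f$ for some unit $f$, the gauge transformation $h = \bigl(\begin{smallmatrix} 1/f & 0 \\ 0 & 1 \end{smallmatrix}\bigr)$ conjugates $\mathrm{d} + \Omega_{|U}$ into a pure $\mathbb{G}_a$-connection of the form $\mathrm{d} + \bigl(\begin{smallmatrix} 0 & f\omega' \\ 0 & 0 \end{smallmatrix}\bigr)$. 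A short computation using condition~(1) shows $\mathrm{d}(f\omega') = f\omega\wedge\omega' - f\omega\wedge\omega' = 0$, so the vanishing $p$-curvature of this abelian connection is the $\alpha_p$-torsor condition $\C(f\omega') = 0$. Reversing these calculations yields the converse.

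The main obstacle is condition~(3): the non-trivial Lie bracket on $\Lie(G)$ couples the $\mathbb{G}_m$- and $\mathbb{G}_a$-components of the connection, forcing the $\mathbb{G}_a$-side of the vanishing $p$-curvature to be expressed in terms of the twisted form $f\omega'$, a twist that only makes sense locally. One should also verify that condition~(3) is intrinsic, i.e.\ independent of the local choice of $f$; this follows from the semilinearity identity $\C(g^p \eta) = g\,\C(\eta)$ for closed $\eta$, together with the fact that any two valid choices of $f$ differ by a $p$-th power.
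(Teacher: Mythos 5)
Your proposal is correct and follows the paper's overall architecture — Frobenius descent for $G=aff(1)$ (\S\ref{sub:Torsors trivialized along the absolute Frobenius}), the embedding into $\GL_2$, Lemma~\ref{lem:curvature-GL} for condition~(1), and reduction to the $\mathbb G_m$ and $\mathbb G_a$ prototypes (Propositions~\ref{prop:line_bundles_and_1_forms} and~\ref{prop:G_a-torsors_and_1_forms}) — but your treatment of condition~(3) genuinely differs from the paper's. The paper's Proposition~\ref{prop:Gm_rt_Ga_vanishing_p-curvature} characterizes $\psi_{s_e+\Omega}=0$ via Corollary~\ref{cor:image_MC_torsors_vanishing_curvatures} as ``$\Omega$ is Zariski-locally of the form $g^{-1}\dd g$'', reads off the local normal form $\omega=f_i^{-1}\dd f_i$, $\omega'=-f_i^{-1}\dd f_i'$ on a chosen cover $(U_i)$, and then, to get $\C(f\omega'_{|U})=0$ for an \emph{arbitrary} pair $(U,f)$, compares $f$ with each $f_i$ (they differ by a $p$-th power) using the $p^{-1}$-linearity of the Cartier operator. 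You instead gauge-transform by $\mathrm{diag}(1/f,1)$ directly on $U$, which strips off the $\mathbb G_m$-component and leaves the abelian connection form $f\omega'$ (correctly checked to be closed from condition~(1)), and then invoke the $\alpha_p$/Cartier criterion; this handles an arbitrary $(U,f)$ in one stroke and makes the ``if'' direction a genuine reversal, at the price of using the invariance of the $p$-curvature under isomorphisms of torsors with connection and under the closed immersion $\mathbb G_a\hookrightarrow aff(1)$. That functoriality is standard but is nowhere stated in the paper; if you want to stay strictly within its toolkit, note that Corollary~\ref{cor:image_MC_torsors_vanishing_curvatures} applied to both $aff(1)$ and $\mathbb G_a$ (both special groups) substitutes for it: $\bigl(\begin{smallmatrix}0&f\omega'\\0&0\end{smallmatrix}\bigr)$ is locally $g^{-1}\dd g$ in $aff(1)$ if and only if $f\omega'$ is locally exact. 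With that point made explicit, your argument is complete.
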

The first condition illustrates a new feature of the non-commutative situation, the appearance of non-closed forms.

Moreover, one gets a complete description of torsors under Frobenius kernels $G^F=\ker (F_G)$ in terms of suitable non-commutative Lie-valued differential forms:

\begin{theoremA}[Theorem \ref{thm:torsors_Frobenius_kernels}]
	\label{thm:torsors_Frobenius_kernelsB}
Assume that $X/k$ is smooth, and $k$ is perfect. There is a natural one to one map $\h^1(X,G^F)\to  \h^0(X,\Omega^1_{X/k}\otimes_k \Lie (G))^{C=\psi=0}$ that for each $g\in G(X)$ sends the image $\partial(g)$ of $g$ via the cobord map $\partial : G(X)\to \h^1(X,G^F)$ to $\dd \log(g)$, commutes with étale base change, and is characterized by these properties.
\end{theoremA}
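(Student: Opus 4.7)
The plan is to proceed exactly as in the $\mu_p$ case sketched in Subsection~\ref{sub:Cartier descent enters the scene}, but at the level of $G$-torsors, using Frobenius descent for $G$-torsors (Theorem~\ref{thm:cartier-descent-torsors}) as the main tool. To a $G^F$-torsor $P$ on $X$ one associates the $G$-torsor $Q:=P\times^{G^F}G$ together with a canonical trivialization of $F_X^*Q$; by Frobenius descent the canonical connection $\nabla_{\mathrm{can}}$ on $F_X^*Q$ then yields, read in the trivial frame, a $\Lie(G)$-valued $1$-form $\omega_P$ on $X$, and the conditions $C(\omega_P)=0=\psi(\omega_P)$ are built into Frobenius descent.

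The canonical trivialization comes from the height-one nature of $G^F$: since the composition $G^F\hookrightarrow G\xrightarrow{F_G}G^{(p)}$ is trivial, the relative Frobenius of $P/X$ factors through a canonical section $\sigma_P$ of $F_X^*P$, and extending $\sigma_P$ along $G^F\hookrightarrow G$ trivializes $F_X^*Q$. Transporting $\nabla_{\mathrm{can}}$ along this trivialization gives a connection of the form $\mathrm{d}+\omega_P$ on the trivial torsor $G\times X$, and this defines the value of the map at $[P]$. Isomorphism invariance is automatic, since every step is canonical in $P$.

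For $g\in G(X)$, the torsor $\partial(g)=\{h\in G\,|\,F_G(h)=g\}$ has trivial associated $G$-torsor, and a direct check shows that its canonical trivialization $\sigma_{\partial(g)}$ differs from the tautological one by the action of $g$; the induced connection on $G\times X$ is therefore $\mathrm{d}+\dd\log g$, as required. Naturality under étale base change is inherited step by step from the coboundary $\partial$, from the associated-bundle construction, and from Frobenius descent. Since $G$ is smooth, $G$-torsors are étale-locally trivial, so by exactness of $G(U)\xrightarrow{F_G}G(U)\xrightarrow{\partial}\h^1(U,G^F)\to \h^1(U,G)$ every $G^F$-torsor is étale-locally of the form $\partial(g)$; combined with étale compatibility and the étale-sheaf nature of the target, this pins the map down uniquely. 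Injectivity then follows from the equivalence of categories in Frobenius descent, as torsors sharing the same $\omega_P$ come with the same descent data.

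I expect the main technical point to be verifying that $\sigma_P$ is really canonical and then unwinding the computation $\omega_{\partial(g)}=\dd\log g$; this involves carefully handling the identification $G^{(p)}\simeq G$ furnished by perfectness of $k$, and relating the canonical connection of Frobenius descent to the pullback of the (left-invariant) Maurer--Cartan form on $G$. Once this is done, the remainder of the argument is essentially formal given Theorem~\ref{thm:cartier-descent-torsors}.
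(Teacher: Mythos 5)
Your proposal is correct and follows essentially the same route as the paper: the chain $\B G^F(X)\simeq \B G(X)^{F_X-triv}\simeq \B G^{(p)}(X^{(p)})^{F_{X/S}-triv}\simeq \At^{triv}(G^{(p)},X)^{C=\psi=0}\simeq \h^0(X,\Omega^1_{X/k}\otimes_k\Lie(G^{(p)}))^{C=\psi=0}$, with the canonical trivialization of $F_X^*P$ coming from the factorization of the relative Frobenius (Lemma \ref{lem:frobenius_pull_back_of_torsors}), compatibility with $\partial$ and $\dd\log$ checked by hand, and uniqueness from the \'etale-local surjectivity of $\partial$. The ``main technical point'' you flag (that $\sigma_P$ is canonical and $\omega_{\partial(g)}=\dd\log g$) is exactly the content of Lemma \ref{lem:frobenius_descent_maurer_cartan}, whose verification the paper likewise leaves as an unraveling of definitions.
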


\subsection{Some context}%
\label{sub:Some context}

The second result (Theorem \ref{thm:torsors_Frobenius_kernelsB}) appeared initially in the second author's phd (\cite{mammeriDescenteCartierTorseurs2016}). The exposition of its proof has been vastly improved.
 
The first result (Corollary \ref{cor:Gm_rt_Ga_vanishing_p-curvatureA}) also appeared in \cite{mammeriDescenteCartierTorseurs2016}, although in a more hidden form. The exposition chosen here is motivated by the recent surge of interest in affine bundles (see \cite{plechingerClassifyingAffineLine2019, plechingerEspacesModulesFibres2019, boucharebAffineHolomorphicBundles2024}). Affine line bundles are in fact a venerable subject, initiated by their classification on Riemann surfaces by André Weil (see \cite{zbMATH03116265}). 

Our study leads to the following conclusions. In the case where the structure group $G$ is abelian, Theorem \ref{thm:torsors_Frobenius_kernelsB} completely solves the issue of classifying $G$-torsors trivialized by the Frobenius, because in this situation there exists an explicit formula for the $p$-curvature (see Appendix \ref{sec:p-curvature_abelian_structure_groups}). However, Corollary \ref{cor:Gm_rt_Ga_vanishing_p-curvatureA} shows that even for the simplest non-abelian structure group, this classification is much more difficult. The case of a general non-abelian structure group remains mysterious.

\subsection{Structure of the text}%
\label{sub:Structure of the text}

In \S \ref{sec:atiyah_s_exact_sequence} we recall Atiyah's definition of a connection on a $G$-torsor, when $G/k$ is a smooth affine group scheme, and use it to define the Maurer-Cartan map $\rm d \log$. We also recall the definition of the associated curvatures.

In the next paragraph, \S \ref{sec:frobenius_descent_for_torsors}, we give an account of Frobenius descent for $G$-torsors, and show how it enables to describe the local structure of connections. We pay  particular attention to the case of special groups.

Paragraph \ref{sec:Affine line bundles trivialized along the Frobenius} is devoted to the study of affine line bundles. We apply the machinery of Frobenius descent to prove Corollary \ref{cor:Gm_rt_Ga_vanishing_p-curvatureA}.

Finally, in \S \ref{sec:Torsors under Frobenius kernels} we gather the previous results and prove Theorem \ref{thm:torsors_Frobenius_kernelsB}.

\subsection{Notations and conventions}
\label{sub:notations_and_conventions}

We work over an arbitrary field \( k \) of positive characteristic $p$ and denote by $S=\spec k$. Throughout $X$ is a smooth $k$-scheme. The tangent sheaf of $X$ is denoted by $\mathcal T_X$. By definition, this is the sheaf $\DER_k(\mathcal O_X)$ of derivations of $\mathcal O_X$ with respect to $k$.

We use $G$ for an affine $k$-group scheme. We consider mainly the fppf topology on $X$, and $G$-torsors $p:P\to X$  for this topology. We stick to EGA's convention and consider \emph{right} $G$-torsors. We denote as usual by $\B G(X)$ the category of $G$-torsors on $X$. We sometimes (especially when $G$ is smooth) use the \'etale topology on $X$; we then stick to the small \'etale site. 

If $\mathcal F$ is a quasi-coherent $G$-sheaf on $P$, we denote as usual by $p_*^G\mathcal F$ the quasi-coherent sheaf on $X$ obtained by pushing forward to $X$ and then taking fixed sections under the action of $G$. 
The adjoint action of $G$ on $\Lie(G)$ give rises to a $G$-sheaf $\mathcal O_P\otimes_k \Lie(G)$ on $P$, and we denote by $\ad(P)=p_*^G(\mathcal O_P\otimes_k \Lie(G))$ the associated locally free sheaf on $X$ (so $\ad(P)$ is the sheaf of sections of the adjoint vector bundle).

Finally, we denote by $F_{X/S}:X \rightarrow X^{(p)}$ the relative Frobenius of $X$ (see Appendix \ref{sec:classical_frobenius_descent} for details).

\subsection{Acknowledgements}
\label{sub:acknowledgements}

The first author wants to thank C\'edric P\'epin and Angelo Vistoli for many stimulating conversations. This work was supported in part by the Labex CEMPI (ANR-11-LABX-0007-01).

\section{Atiyah's exact sequence}
\label{sec:atiyah_s_exact_sequence}

Throughout this section, $G$ will denote a smooth affine group scheme over $k$.

\subsection{Invariant vector fields on a torsor}
\label{sub:invariant_vector_fields_on_a_torsor}

Atiyah's definition of a connection on a $G$-torsor relies on the following elementary fact:

\begin{lemma}
\label{lem:atiyah_s_exact_sequence}
Let $G/k$ be a smooth affine group scheme and $p:P\rightarrow X$ a $G$-torsor. Then there is a natural exact sequence :
\begin{equation}
\label{eq:atiyah_s_exact_sequence}
	0\rightarrow \ad(P) \rightarrow p_*^G(\mathcal T_P) \rightarrow  \mathcal T_X \rightarrow 0
\end{equation}
\end{lemma}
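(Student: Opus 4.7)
The plan is to start from the canonical exact sequence of tangent sheaves for the smooth morphism $p:P\to X$ (smoothness of $p$ follows from smoothness of $G$ together with fppf local triviality of the torsor),
\[ 0\to \mathcal T_{P/X}\to \mathcal T_P \xrightarrow{dp} p^*\mathcal T_X \to 0, \]
then apply the functor $p_*^G$ of $G$-invariant pushforward and identify each resulting term with what appears in Atiyah's sequence.

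To handle the kernel, I would use the derivative of the right $G$-action $a:P\times_k G\to P$. Differentiation along the $G$-factor at the identity produces an $\mathcal O_P$-linear map
\[ \mathcal O_P\otimes_k \Lie(G)\longrightarrow \mathcal T_{P/X}, \]
which is an isomorphism: this can be checked fppf locally on $X$, where $P$ is trivial and the map reduces to the familiar trivialization of $\mathcal T_G$ by invariant vector fields. The delicate point — and one step to spell out carefully — is that the natural $G$-equivariant structure on $\mathcal T_{P/X}$, when transported across this isomorphism, is exactly the tensor product of the canonical action on $\mathcal O_P$ with the adjoint action on $\Lie(G)$. This boils down to the standard identification of the conjugation action on vertical invariant vector fields with $\ad$. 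Taking $G$-invariants then yields $p_*^G\mathcal T_{P/X}\cong \ad(P)$ by the very definition recalled in \S\ref{sub:notations_and_conventions}.

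For the right-hand term, $p^*\mathcal T_X$ carries the $G$-equivariant structure inherited from the fact that $p$ is $G$-invariant, so the $G$-action is trivial in the $\mathcal T_X$-factor. The projection formula together with $p_*^G\mathcal O_P=\mathcal O_X$ (which merely expresses $X=P/G$ as an fppf quotient) then gives
\[ p_*^G(p^*\mathcal T_X)\;\cong\; \mathcal T_X\otimes_{\mathcal O_X} p_*^G\mathcal O_P\;=\;\mathcal T_X. \]

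What remains is surjectivity of the right-hand map after taking invariants, and this is the main obstacle because $p_*^G$ is only left exact in general. I would handle it by reducing to fppf local triviality: over an fppf cover $U\to X$ on which $P_U\cong U\times_k G$, the tangent sheaf splits as $\mathcal T_{P_U}\cong p_U^*\mathcal T_U\oplus p_G^*\mathcal T_G$, and taking $G$-invariants gives $\mathcal T_U\oplus(\mathcal O_U\otimes_k\Lie(G))$, so the sequence visibly splits. Since the formation of the whole sequence commutes with fppf base change on $X$, and exactness of sequences of quasi-coherent sheaves may be tested fppf locally, the global sequence \eqref{eq:atiyah_s_exact_sequence} is exact.
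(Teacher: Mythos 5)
The paper offers no argument here: it simply cites Atiyah's Theorem~1, which is proved in the holomorphic category. Your proof is the standard scheme-theoretic argument --- push the relative tangent sequence of the smooth morphism $p$ through $p_*^G$ and identify the three terms --- and its overall strategy is sound, so it usefully supplies what the citation leaves implicit. The identifications $p_*^G\mathcal T_{P/X}\cong\ad(P)$ (via differentiating the action, with the adjoint equivariant structure) and $p_*^G(p^*\mathcal T_X)\cong\mathcal T_X$ (projection formula plus $p_*^G\mathcal O_P=\mathcal O_X$, valid since $p$ is affine) are correct as you describe them, and you are right that matching the $G$-action on vertical fields with $\ad$ is the one computation that must be done honestly.

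The last step, however, is stated too loosely to be correct as written: the formation of the sequence does \emph{not} commute with arbitrary fppf base change $U\to X$, because for a non-\'etale $U\to X$ neither $\mathcal T_U$ nor $\mathcal T_{P_U}$ is the pullback of $\mathcal T_X$, resp.\ $\mathcal T_P$ --- the $k$-relative tangent sheaves simply do not pull back along ramified covers, so the restricted sequence is not the intrinsic sequence of $P_U\to U$. The repair is immediate but should be made explicit. Either invoke the fact (used by the paper in \S\ref{sub:special_groups} and in the proof of Proposition \ref{prop:local_structure_torsors_vanishing_curvatures}) that a torsor under a \emph{smooth} affine group scheme is trivialized by an \emph{\'etale} cover, along which all the constructions do commute with base change; or, more cleanly, avoid choosing a cover altogether by noting that $p:P\to X$ is itself an fppf cover, so that $p_*^G$ restricted to $G$-equivariant quasi-coherent sheaves is the inverse of the descent equivalence $\QCoh(X)\simeq\QCoh^G(P)$ and is therefore exact; surjectivity of $p_*^G\mathcal T_P\to\mathcal T_X$ then follows at once from surjectivity of $\mathcal T_P\to p^*\mathcal T_X$, with no local triviality needed.
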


\begin{proof}
	\cite[Theorem 1]{atiyah:connections}.
\end{proof}

\begin{definition}[\cite{atiyah:connections}]
	\label{def:atiyah_connection}
An  Atiyah-connection on a $G$-torsor $p:P\rightarrow X$ is an \( \mathcal{O}_X \)-linear splitting $s:\mathcal T_X \to p_*^G(\mathcal T_P)$ of the exact sequence \eqref{eq:atiyah_s_exact_sequence}.
\end{definition}

We will use the notation $\At(G,X)$ for the category of $G$-torsors on $X$ endowed with a connection.

When $G=\GL_n$, one recovers the familiar notion of Koszul-connection, that is, a locally free sheaf $\mathcal{E}$ endowed with a $k$-linear map $\nabla: \mathcal{E} \rightarrow \mathcal{E}\otimes_{\mathcal{O}_X} \Omega^1_{X/k}$, satisfying Leibniz rule. 

It is clear that any section $\sigma:X\to P$ of a $G$-torsor $p:P\to X$ induces an Atiyah-connection $s_\sigma=p_*^G(\mathcal T_\sigma):\mathcal T_X\to p_*^G(\mathcal T_P)$. In particular, the unit section $e: X\to X\times G$ of the trivial torsor $\pr_1:X\times G\to X$ induces an Atiyah-connection $s_e$ on $X\times G$.

\begin{definition}[Maurer-Cartan map]
\label{def:Maurer-Cartan_map}
The Maurer-Cartan map 
$$\dd \log:G(X)\to \h^0(X,\Omega^1_{X/k}\otimes_{k}\Lie(G))\;\;,$$ denoted by $g \mapsto \Omega_g$, is the map given by, for all $g\in G(X)$: $\Omega_g=s_{g^{-1}}-s_e$.
\end{definition}

When $G=\GL_n$, there is an explicit and well-known formula for the Maurer-Cartan map.

\begin{lemma}
\label{lem:maurer-cartan-GL}
The map $\dd \log: \GL_n(X) \to \h^0(X,\Lie(\GL_n)\otimes_k \Omega_{X/k}^1)$ is given by: $\dd \log(g)= g^{-1}\mathrm{d}g  $.
\end{lemma}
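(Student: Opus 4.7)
The plan is to use the well-known equivalence between Atiyah connections on $\GL_n$-torsors and Koszul connections on the associated rank-$n$ vector bundle, which reduces the statement to a one-line Leibniz computation. The assertion is local on $X$, so I would first restrict to a Zariski open where the torsor is trivial; explicitly, I pass to $P = X \times \GL_n$ and view the associated bundle as $E \simeq \mathcal O_X^{\oplus n}$ via the unit trivialization $[(x, 1), v] \leftrightarrow (x, v)$.

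Under this dictionary, the Atiyah connection $s_\sigma$ induced by a local section $\sigma : X \to P$ corresponds to the unique Koszul connection $\nabla_\sigma$ on $E$ for which the frame of $E$ attached to $\sigma$ is horizontal. For the unit section $e$, the attached frame is the standard basis, so $\nabla_e = \dd$; for the section $\sigma_{g^{-1}} : x \mapsto (x, g(x)^{-1})$, the attached frame consists of the columns $g^{-1} e_i$ of $g^{-1}$, so $\nabla_{g^{-1}}$ is characterized by $\nabla_{g^{-1}}(g^{-1} e_i) = 0$ for each $i$.

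Writing a generic local section of $E$ in this frame as $s = g^{-1} f$, the Leibniz rule gives $\dd s = \dd(g^{-1}) f + g^{-1} \dd f$, while $\nabla_{g^{-1}}(s) = g^{-1} \dd f$ by horizontality. Subtracting and using the identity $\dd(g^{-1}) = -g^{-1}(\dd g) g^{-1}$ yields $(\nabla_{g^{-1}} - \dd)(s) = g^{-1}(\dd g) \cdot s$, so that $\nabla_{g^{-1}} - \dd$ is multiplication by $g^{-1} \dd g \in \h^0(X, \Omega^1_{X/k} \otimes_k \mathfrak{gl}_n)$. Reading this equality back through the equivalence produces $\Omega_g = s_{g^{-1}} - s_e = g^{-1}\dd g$.

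The one genuinely delicate point is making the Atiyah-to-Koszul dictionary explicit with the correct conventions: for a right $\GL_n$-torsor, one has to match the action of $\GL_n$ on $k^n$ with the direction of the torsor's action on $P$, and it is precisely this matching that forces the inverse in the definition $\Omega_g = s_{g^{-1}} - s_e$ rather than $s_g - s_e$. Once the conventions are pinned down by demanding that $s_e$ recover the trivial connection on $\mathcal O_X^{\oplus n}$, the remainder is the three lines of Leibniz above.
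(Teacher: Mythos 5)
Your proof is correct and follows essentially the same route as the paper's: both pass through the Atiyah-to-Koszul dictionary on the trivial torsor and reduce the identity to a Leibniz-rule computation. The only cosmetic difference is that the paper characterizes $\Omega_g$ by requiring left multiplication $L_g$ to be a horizontal isomorphism $(X\times G,s_e+\Omega_g)\to(X\times G,s_e)$ of torsors with connections, whereas you compute $s_{g^{-1}}$ directly as the Koszul connection annihilating the frame formed by the columns of $g^{-1}$; the resulting Leibniz computation is the same.
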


\begin{proof}
	Given $g \in G$, the element $\dd \log(g)=\Omega_g$ of $\h^0(X,\Lie(\GL_n)\otimes_k \Omega_{X/k}^1)$ is characterized by the fact that left multiplication $L_g: (X\times G,s_e+\Omega_g) \to (X\times G,s_e)$ is an isomorphism of torsors endowed with Atiyah-connections. This translates into the fact that $g:(\mathcal O_X^{\otimes n},\mathrm{d}+\Omega_g ) \to (\mathcal O_X^{\otimes n},\mathrm{d})$ is a morphism of free sheaves endowed with Koszul-connections, which is equivalent to the displayed formula for $\dd \log(g)=\Omega_g$.
\end{proof}

By functoriality in $G$, we get the same formula for any smooth closed subgroup of $\GL_n$. For instance, by using the standard embedding $\mathbb G_a \to \GL_2$, $a\mapsto \begin{pmatrix}
	1 & a \\ 0&1
\end{pmatrix}$, we see that the Maurer-Cartan map for $\mathbb G_a$ is given by $a\mapsto \mathrm{d} a$.

For later use, we record the following lemma describing the image of $\dd \log$.

\begin{lemma}
\label{lem:image_of_MC}
Let $\Omega \in \h^0(X,\Omega^1_{X/k}\otimes_{k}\Lie(G))$. Then there exists $g\in G(X)$ such that $\Omega=\Omega_g$ if and only if $(X\times G,s_e+\Omega) \simeq (X\times G,s_e)$.
\end{lemma}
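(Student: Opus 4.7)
The strategy is to reduce the equivalence to two elementary facts about the trivial right $G$-torsor $X \times G$. First, every torsor automorphism has the form $L_g\colon (x,h) \mapsto (x,g(x)h)$ for a unique $g \in G(X)$. Second, for any such automorphism $\varphi$ and any section $\sigma\colon X \to X \times G$, the naturality identity $\varphi_* s_\sigma = s_{\varphi \circ \sigma}$ holds; this is immediate from the construction of $s_\sigma$ as the image of the tangent map $\mathcal T_\sigma$. I would dispatch both of these as preliminary observations.

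For the ``only if'' direction, I would start from $g \in G(X)$ with $\Omega = \Omega_g = s_{g^{-1}} - s_e$, so that $s_e + \Omega = s_{g^{-1}}$. Since $L_g \circ g^{-1} = e$, the naturality identity yields $(L_g)_* s_{g^{-1}} = s_e$, and $L_g$ thus promotes to an isomorphism $(X \times G, s_e + \Omega) \to (X \times G, s_e)$ of torsors endowed with Atiyah-connections. This is essentially the same computation already used in the proof of Lemma \ref{lem:maurer-cartan-GL}.

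For the ``if'' direction, I would start from an isomorphism $\varphi\colon (X \times G, s_e + \Omega) \to (X \times G, s_e)$ and write $\varphi = L_g$ for some $g \in G(X)$. The compatibility of $\varphi$ with the connections reads $(L_g)_*(s_e + \Omega) = s_e$; combined with the identity $(L_g)_* s_{g^{-1}} = s_e$ from the previous step and the invertibility of $(L_g)_*$ as an $\mathcal O_X$-linear map, this forces $s_e + \Omega = s_{g^{-1}}$, i.e.\ $\Omega = \Omega_g$.

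No serious obstacle arises: the result is essentially tautological once one has unwound the definition of $\Omega_g$. The only point demanding mild attention is the asymmetric appearance of $g^{-1}$ in the definition $\Omega_g = s_{g^{-1}} - s_e$, which is precisely what makes left translation by $g$ (rather than by $g^{-1}$) realise the relevant isomorphism.
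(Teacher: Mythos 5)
Your proof is correct and follows essentially the same route as the paper, which simply compresses the whole argument to the observation that both conditions are equivalent to the existence of $g\in G(X)$ with $s_e+\Omega=s_g$; your version just unwinds the identification of torsor automorphisms with left translations and the naturality $(L_g)_*s_\sigma=s_{L_g\circ\sigma}$ explicitly.
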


\begin{proof}
This is clear, since both assertions are equivalent to : there exists $g\in G(X)$ such that $s_e+\Omega=s_g$.
\end{proof}

\subsection{Curvatures}
\label{sub:curvatures}

\subsubsection{Usual curvature}%
\label{ssub:Usual curvature}

\begin{definition}
	\label{def:curvature-atiyah-connection}
Let $s:\mathcal T_X \to p_*^G(\mathcal T_P)$ be an Atiyah-connection on a $G$-torsor $p:P\rightarrow X$. Its curvature is the morphism $C_s:\mathcal T_X\times \mathcal{T}_X \to \ad(P)$ given for all $D,D'$ in \( \mathcal{O}_X \) by : 
\[ C_s(D, D')=[s(D),s(D')]-s([D,D']) \;\; .\]
  \end{definition}

  The curvature $C_s$ of an Atiyah-connection is a skew-symmetric $\mathcal O_X$-bilinear morphism (see \cite[\S 3.2]{langer:algebroid} ; in this language, the morphism $p_*^G \mathcal T_p: p_*^G(\mathcal T_P)\to \mathcal T_X$ can be seen as the anchor of a natural restricted Lie algebroid over $X/k$ with kernel $\left(p_*^G(\mathcal T_P)\right)^{(0)}=\ad(P)$ ).
We will also denote by $C_s:\bigwedge ^2 \mathcal{T}_X \to \ad(P)$ the corresponding $\mathcal O_X$-linear morphism. It can be seen as a global section of the sheaf $\Omega^2_{X/S}\otimes_{ \mathcal{O}_X} \ad (P)$. 

When $G=\GL_n$, it is easy to compute the curvature of connections on the trivial bundle:

\begin{lemma}
\label{lem:curvature-GL}
Let $\Omega \in \h^0(X,\Lie(\GL_n)\otimes_k \Omega_{X/k}^1)$. Then there is equality in $\h^0(X,\Lie(\GL_n)\otimes_k \Omega_{X/k}^2)$ : \( C_{s_e+\Omega}= \mathrm d \Omega + \Omega \wedge \Omega \).
\end{lemma}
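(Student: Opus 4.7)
The plan is to compute the curvature directly in the canonical trivialization of the Atiyah sequence afforded by the unit section of $P = X \times \GL_n$. Since $s_e$ is a global splitting, it yields an $\mathcal{O}_X$-linear isomorphism $p_*^G(\mathcal{T}_P) \simeq \mathcal{T}_X \oplus \ad(P)$, with $\ad(P) \simeq \mathcal{O}_X \otimes_k \Lie(\GL_n)$ for the trivial torsor. The first step is to write down the Lie bracket of invariant vector fields in this decomposition: for $D, D' \in \mathcal{T}_X$ and $\xi, \xi' \in \ad(P)$ one has
\[ [(D,\xi),(D',\xi')] = ([D,D'],\; D(\xi') - D'(\xi) + [\xi,\xi']), \]
where the inner bracket is the pointwise commutator in $\Lie(\GL_n)$. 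This is either a direct calculation using the decomposition $\mathcal{T}_{X \times \GL_n} \simeq \pr_1^*\mathcal{T}_X \oplus \pr_2^*\mathcal{T}_{\GL_n}$ (the right-$G$-invariant part of the second summand being the trivial bundle with fibre $\Lie(\GL_n)$), or a specialization of the general Atiyah algebroid formula alluded to after Definition \ref{def:curvature-atiyah-connection}.

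Once this bracket formula is in hand, the rest is a short unravelling. Writing $(s_e + \Omega)(D) = (D, \Omega(D))$ and plugging into the definition $C_s(D,D') = [s(D),s(D')] - s([D,D'])$, the $\mathcal{T}_X$-component cancels and the $\ad(P)$-component becomes
\[ D(\Omega(D')) - D'(\Omega(D)) - \Omega([D,D']) + [\Omega(D), \Omega(D')]. \]
The first three terms constitute the Cartan formula for $\mathrm{d}\Omega(D,D')$, while the last one equals $(\Omega \wedge \Omega)(D,D')$ under the standard convention $(\alpha \wedge \beta)(D,D') = \alpha(D)\beta(D') - \alpha(D')\beta(D)$ applied to matrix-valued 1-forms. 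Reassembling this into a section of $\Omega^2_{X/S} \otimes_{\mathcal{O}_X} \ad(P)$ gives the desired equality.

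The main obstacle is purely expository: securing the bracket formula on $p_*^G(\mathcal{T}_P)$ for the trivial torsor. Once that identification is made precise, everything that follows is a routine comparison between two explicit $\ad(P)$-valued 2-forms, and the actual content of the proof lies entirely in this description of the Atiyah algebroid of $X \times \GL_n$.
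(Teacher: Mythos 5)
Your proof is correct, but it takes a genuinely different route from the paper: the paper disposes of this lemma with a one-line citation to Deligne's formula for the curvature of the Koszul connection $\mathrm d+\Omega$ on a trivial vector bundle, which implicitly uses the dictionary between Atiyah connections on $X\times\GL_n$ and Koszul connections on $\mathcal O_X^{\oplus n}$ together with the compatibility of the two notions of curvature. You instead compute directly from Definition \ref{def:curvature-atiyah-connection} on the Atiyah algebroid of the trivial torsor, via the splitting $p_*^G(\mathcal T_P)\simeq \mathcal T_X\oplus(\mathcal O_X\otimes_k\Lie(\GL_n))$ and the bracket $[(D,\xi),(D',\xi')]=([D,D'],\,D(\xi')-D'(\xi)+[\xi,\xi'])$; the Cartan formula and the convention $(\alpha\wedge\beta)(D,D')=\alpha(D)\beta(D')-\alpha(D')\beta(D)$ then deliver $\mathrm d\Omega+\Omega\wedge\Omega$ exactly as you say. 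What your approach buys is self-containedness: no appeal to the Koszul picture is needed, and the argument works uniformly for any smooth affine $G$ with $[\xi,\xi']$ the pointwise Lie bracket. The one place to be careful — and which you rightly flag as the real content — is the identification of the $G$-invariant \emph{vertical} vector fields on $X\times\GL_n$ (for the right action) with $\mathcal O_X\otimes_k\Lie(\GL_n)$; depending on whether one lands on left- or right-invariant fields, the bracket picks up a sign, which would flip $+\Omega\wedge\Omega$ to $-\Omega\wedge\Omega$. A quick sanity check pinning down the sign is that your formula must vanish on $\Omega_g=g^{-1}\mathrm dg$ (Lemma \ref{lem:maurer-cartan-GL}), and indeed $\mathrm d(g^{-1}\mathrm dg)+g^{-1}\mathrm dg\wedge g^{-1}\mathrm dg=0$, confirming your conventions are consistent with the paper's.
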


\begin{proof}
	\cite[I (3.2.2)]{deligne:equadifs}. 
\end{proof}

\subsubsection{\texorpdfstring{$p$}{p}-curvature}%
\label{ssub:$p$-curvature}

\begin{definition}
\label{def:p-curvature-atiyah-connection}
Let $s:\mathcal T_X \to p_*^G(\mathcal T_P)$ be an Atiyah-connection on a $G$-torsor $p:P\rightarrow X$. The $p$-curvature $\psi_s:\mathcal{T}_X \to \ad P$ is the morphism  given for $D\in \mathcal T_X$ by
\[ \psi_s(D)=s(D)^p-s(D^p) \] 
\end{definition}

If the Atiyah connection $s$ is integrable, then the morphism $\psi_s$ is $p$-linear (see \cite[Lemma 4.9]{langer:algebroid}).

If $F_X$ denotes the absolute Frobenius of $X$, we will also write $\psi_s: \mathcal{T}_X \to {F_X}_*(\ad P)$ for the associated $\mathcal O_X$-linear morphism, or any of the equivalent following pieces of data : a $\mathcal O_X$-linear morphism $F_{X/S}^*\mathcal{T}_{X^{(p)}}\simeq {F_X}^* \mathcal{T}_X \to \ad P$ (where $F_{X/S}:X \rightarrow X^{(p)}$ is the relative Frobenius), or an element of  $\h^0(X,F_{X/S}^*\Omega^1_{X^{(p)}/k}\otimes_{\mathcal O_X} \ad(P))$.

In contrast to the usual curvature, when $G$ is non-abelian, there is no explicit formula (known to us) for the $p$-curvature of a connection on a trivial $G$-torsor, even when $G=\GL_n$. Nevertheless, we will be able to characterize, thanks to Cartier descent, the vanishing of $p$-curvature, see forthcoming Corollary \ref{cor:image_MC_torsors_vanishing_curvatures}. The first step is the following.
\begin{lemma}
	\label{lem:(p)-curvature_zero_etale_local}
Let $s:\mathcal T_X \to p_*^G(\mathcal T_P)$ be an Atiyah-connection on a $G$-torsor $p:P\rightarrow X$. The cancellation of $C_s$ is a local property with respect to the \'etale topology on $X$. If the connection $s$ is integrable, the same holds for the cancellation of $\psi_s$.
\end{lemma}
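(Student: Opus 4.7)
The plan is to present both curvatures as global sections of quasi-coherent sheaves on $X$ whose formation commutes with étale base change, and then to invoke the fact that the vanishing of a global section of a quasi-coherent sheaf is detected on any faithfully flat cover, in particular on any surjective étale cover.

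First, I would reinterpret the curvatures as sections of quasi-coherent sheaves. The $\mathcal O_X$-bilinear skew-symmetric morphism $C_s$ defines a global section of $\Omega^2_{X/k}\otimes_{\mathcal O_X}\ad(P)$; and when $s$ is integrable, the $p$-linearity of $\psi_s$ (already recalled after Definition~\ref{def:p-curvature-atiyah-connection}) allows one to see $\psi_s$ as a global section of $F_{X/S}^*\Omega^1_{X^{(p)}/k} \otimes_{\mathcal O_X}\ad(P)$. Both target sheaves are quasi-coherent.

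Next, for a surjective étale morphism $u: X' \to X$, set $P' = X' \times_X P$, so $P' \to X'$ is a $G$-torsor and the base-changed map $u_P: P' \to P$ is again étale. Since étale morphisms are formally étale, one has canonical isomorphisms $u^*\mathcal T_X \simeq \mathcal T_{X'}$ and $u_P^*\mathcal T_P \simeq \mathcal T_{P'}$, and flat base change applied to the pushforward together with the compatibility of $G$-invariants with flat pullback furnishes $u^* p_*^G(\mathcal T_P) \simeq (p')_*^G(\mathcal T_{P'})$ and $u^*\ad(P) \simeq \ad(P')$; under these identifications the Atiyah sequence for $P'$ is the pullback of the Atiyah sequence for $P$, so that $s$ pulls back to a well-defined Atiyah-connection $s'$ on $P'$. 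I would then check that $u^*C_s = C_{s'}$ and $u^*\psi_s = \psi_{s'}$; these identities reduce to the fact that the Lie bracket of derivations and the $p$-th-power operation on derivations are both compatible with étale pullback, which in turn rests on the unique extendibility of derivations along étale morphisms (the same argument being applied on the total space via the étale morphism $u_P: P' \to P$). Faithful flatness of $u$ then implies that $\Gamma(X,\mathcal F) \to \Gamma(X',u^*\mathcal F)$ is injective for every quasi-coherent $\mathcal F$, giving the claimed étale-local property of vanishing for both $C_s$ and, under the integrability hypothesis, $\psi_s$.

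The main obstacle should really only be bookkeeping of the base-change compatibilities in the Atiyah sequence; the crucial conceptual input is that the bracket and $p$-th-power operations on vector fields commute with étale pullback, which is a direct consequence of the formal étaleness of $u$.
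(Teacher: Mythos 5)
Your proposal is correct and follows essentially the same route as the paper: both arguments realize $C_s$ and $\psi_s$ as global sections of quasi-coherent sheaves ($\Omega^2_{X/k}\otimes_{\mathcal O_X}\ad(P)$, resp.\ the Frobenius pullback of $\Omega^1$ tensored with $\ad(P)$) whose formation commutes with \'etale base change, and then use that vanishing of such a section is detected on a faithfully flat cover (the paper phrases this as the sheaf property on the small \'etale site, you phrase it as injectivity of $\Gamma(X,\mathcal F)\to\Gamma(X',u^*\mathcal F)$ --- the same fact). Your extra care in justifying $u^*C_s=C_{s'}$ and $u^*\psi_s=\psi_{s'}$ via unique extension of derivations along \'etale morphisms is a welcome elaboration of a step the paper merely asserts.
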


\begin{proof}
	Let $\mathcal F= \Omega^2_{X/S}\otimes_{ \mathcal{O}_X} \ad (P)$ (resp. $\mathcal F= F_X^*(\Omega^1_{X/S})\otimes_{ \mathcal{O}_X} \ad (P)$) . Since $\mathcal F$ is quasi-coherent on $X$, by flat descent, the pre-sheaf $(f:X'\to X)\mapsto \h^0(X',f^*\mathcal F)$ is a sheaf on the small \'etale site of $X$. Besides, since formation of $\Omega^1_{X/S}$  and of $\ad(P)$ commutes with étale base change, we have that $f^*\mathcal F=\Omega^2_{X'/S}\otimes_{ \mathcal{O}_{X'}} \ad (P')$ (resp. $f^*\mathcal F=F_{X'}^*(\Omega^1_{X'/S})\otimes_{ \mathcal{O}_{X'}} \ad (P')$), and the restriction morphism $\h^0(X,\mathcal F)\to \h^0(X',f^*\mathcal F)$ sends $C_s$ to ${C_{s'}}$ (resp. $\psi_s$ to ${\psi_{s'}}$), where $s'$ is the induced Atiyah-connection on the induced $G$-torsor $P'\to X'$.
\end{proof}

\section{Frobenius descent for torsors}%
\label{sec:frobenius_descent_for_torsors}

\subsection{Frobenius descent for torsors under a smooth group scheme}
\label{sub:frobenius_descent_for_torsors_under_a_smooth_group_scheme}

Let $G/k$ be a smooth affine group scheme. We first generalize the fact that if $\mathcal F$ is a locally free sheaf on $X^{(p)}$, then $F_{X/S}^* \mathcal{F}$ is endowed with a canonical connection $\nabla_{can}$ (see Appendix \ref{sec:classical_frobenius_descent}).

\begin{lemma}
\label{lem:canonical_connection}
The functor $F_{X/S}^*: \B G(X^{(p)})\rightarrow \B G(X)$ factors canonically through $\At(G,X)\rightarrow \B G(X)$.
\end{lemma}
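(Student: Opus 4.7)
The plan is to construct, for each $G$-torsor $P'$ on $X^{(p)}$, a canonical Atiyah-connection on $P := F_{X/S}^* P'$, functorial in $P'$. The only nontrivial input I need is the classical vanishing of the differential of the relative Frobenius: the morphism $dF_{X/S}: F_{X/S}^* \Omega^1_{X^{(p)}/S} \to \Omega^1_{X/S}$ is zero (functions pulled back along $F_{X/S}$ are, locally on affines, $p$-th powers of lifts).

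First I would set $P = P' \times_{X^{(p)}} X$ and work with the resulting Cartesian square whose horizontal maps are $F_{P/P'}$ and $F_{X/S}$, and whose vertical maps are $p$ and $p'$. Note that $F_{P/P'}$ is $G$-equivariant, since the $G$-action on $P$ is inherited from the one on $P'$ via base change. By base change, $\Omega^1_{P/X} \cong F_{P/P'}^* \Omega^1_{P'/X^{(p)}}$. Next comes the key construction: consider the pulled-back differential $dF_{P/P'}: F_{P/P'}^* \Omega^1_{P'/k} \to \Omega^1_{P/k}$. The kernel of the natural surjection $F_{P/P'}^* \Omega^1_{P'/k} \to F_{P/P'}^* \Omega^1_{P'/X^{(p)}} = \Omega^1_{P/X}$ is $F_{P/P'}^* p'^* \Omega^1_{X^{(p)}/k} = p^* F_{X/S}^* \Omega^1_{X^{(p)}/k}$. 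The image of this kernel in $\Omega^1_{P/k}$ is computed by applying $p^*$ to $dF_{X/S}$, which vanishes. Hence $dF_{P/P'}$ descends to a well-defined morphism $s^*: \Omega^1_{P/X} \to \Omega^1_{P/k}$, and a direct check (composing with the quotient $\Omega^1_{P/k} \to \Omega^1_{P/X}$, which agrees with $F_{P/P'}^*$ of the relative quotient on $P'$) shows that $s^*$ is a section of $\Omega^1_{P/k} \twoheadrightarrow \Omega^1_{P/X}$, i.e.\ a splitting of the cotangent Atiyah sequence.

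Since every map in the construction is $G$-equivariant, $s^*$ is $G$-equivariant. Dualizing and applying $p_*^G$ turns it into a splitting of the exact sequence \eqref{eq:atiyah_s_exact_sequence}, that is, a canonical Atiyah-connection $s_{\mathrm{can}}$ on $P$. Functoriality in $P'$ is immediate: a morphism $\phi': P_1' \to P_2'$ of $G$-torsors on $X^{(p)}$ base changes to $\phi: P_1 \to P_2$ with $F_{P_2/P_2'} \circ \phi = \phi' \circ F_{P_1/P_1'}$, and this compatibility directly implies that $\phi$ pulls back the splitting constructed from $P_2'$ to the one constructed from $P_1'$, hence is horizontal. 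The single nonformal step in the whole argument is the descent of $dF_{P/P'}$ to $s^*$; this is precisely the mechanism by which Cartier's canonical connection on $F_{X/S}^*\mathcal E'$ (whose local formula is $\nabla(f \otimes e') = df \otimes e'$) generalizes to torsors under an arbitrary smooth affine group.
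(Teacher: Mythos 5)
Your proof is correct and is essentially the paper's argument in dual form: the paper produces the splitting by observing that in the morphism from Atiyah's sequence for $P\to X$ to the pullback of Atiyah's sequence for $P'\to X^{(p)}$, the map $\mathcal T_X\to F_{X/S}^*\mathcal T_{X^{(p)}}$ vanishes, so the middle arrow factors through $\ad(P)$ and yields a retraction; you run the same mechanism on the cotangent side of the total space via $dF_{P/P'}$ and then dualize and apply $p_*^G$. The key input (the differential of the relative Frobenius is zero) and the resulting connection are the same.
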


\begin{proof}
	Let $p:P\to X^{(p)}$ be a $G$-torsor on $X^{(p)}$ and $q:X\times_{X^{(p)}}P \to X$ its pullback along $F_{X/S}:X\to X^{(p)}$. There is a natural morphism of exact sequences:
	\[ 
	\begin{tikzcd}
		0 \arrow[r] & \ad(X\times_{X^{(p)}}P) \arrow[r]\isoarrow{d} &  q_*^G(\mathcal T_{X\times_{X^{(p)}}P}) \arrow[r]\arrow[d,""]&  \mathcal T_X \arrow[r]\arrow[d,"0"] & 0 \\
		0 \arrow[r] & F_{X/S}^*\ad(P) \arrow[r] &  F_{X/S}^*{p}_*^{G}(\mathcal T_{P}) \arrow[r]&  F_{X/S}^*\mathcal T_{X^{(p)}} \arrow[r] & 0
\end{tikzcd}
\] 
Since the Frobenius morphism induces $0$ on tangent spaces, the morphism  $q_*^G(\mathcal T_{X\times_{X^{(p)}}P})\to F_{X/S}^*{p}_*^{G}(\mathcal T_{P})$ factors through $\ad(X\times_{X^{(p)}}P) \to F_{X/S}^*{p}_*^{G}(\mathcal T_{P})$, and this gives a retraction of the top exact sequence, or equivalently a section $s_{can}: \mathcal T_X \to  q_*^G(\mathcal T_{X\times_{X^{(p)}}P})$. Thus we get a canonical Atiyah-connection (Definition \ref{def:atiyah_connection}) $s_{can}$ on  $X\times_{X^{(p)}}P$.
\end{proof}

We use the symbol $(F_{X/S}^*\cdot, s_{can}) : \B G(X^{(p)})\rightarrow \At(G,X)$ (or simply $F_{X/S}^*$ when there is no ambiguity) to denote the functor induced by Lemma \ref{lem:canonical_connection}.

\label{sec:Frobenius descent for torsors}
\begin{theorem}[Frobenius descent for $G$-torsors]
\label{thm:cartier-descent-torsors}

Let $X/k$ be a smooth scheme and $G/k$ a smooth affine group scheme. The functor $(F_{X/S}^*\cdot, s_{can})$  induces an equivalence of categories: 
\[ (F_{X/S}^*\cdot, s_{can}) : \B G(X^{(p)})\rightarrow \At(G,X)^{C=\psi=0} \]
between the category of $G$-torsors on $X^{(p)}$ and the category of $G$-torsors on $X$ endowed with an Atiyah-connection with vanishing curvature and vanishing $p$-curvature.
\end{theorem}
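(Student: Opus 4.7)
The plan is to prove full faithfulness and essential surjectivity of $(F_{X/S}^*\cdot, s_{can})$ separately, reducing each to the classical Cartier descent for vector bundles recalled in Appendix \ref{sec:classical_frobenius_descent}.

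For full faithfulness, let $P, Q \in \B G(X^{(p)})$. A morphism $(F_{X/S}^*P, s_{can}) \to (F_{X/S}^*Q, s_{can})$ in $\At(G,X)^{C=\psi=0}$ is a horizontal (for the canonical connections) $X$-section of the pullback $F_{X/S}^*\Isom_G(P,Q)$. When nonempty, the $X^{(p)}$-scheme $\Isom_G(P,Q)$ is a right torsor under the inner form $\Aut_G(P)$ of $G$, hence affine over $X^{(p)}$. Classical Cartier descent applied to its structure sheaf identifies horizontal $X$-sections of its pullback with sections over $X^{(p)}$, i.e.\ with morphisms $P \to Q$ of $G$-torsors, as required.

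For essential surjectivity, observe that both $\B G(X^{(p)})$ and $\At(G,X)^{C=\psi=0}$ satisfy étale descent over $X^{(p)}$: for the first this is the usual descent for torsors; for the second, $G$-torsors and Atiyah connections descend, and vanishing of $C$ and $\psi$ is étale-local by Lemma \ref{lem:(p)-curvature_zero_etale_local}, noting that étale covers of $X^{(p)}$ pull back to étale covers of $X$. It thus suffices to show that étale-locally on $X^{(p)}$, every $(P, s) \in \At(G,X)^{C=\psi=0}$ is isomorphic to $(X \times G, s_e)$, which identifies with $F_{X/S}^*$ of the trivial $G$-torsor on $X^{(p)}$ equipped with canonical connection. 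I would fix a closed embedding $\rho: G \hookrightarrow \GL_n$, available for any smooth affine $G/k$, and form the associated $\GL_n$-torsor $\rho_*(P) = P \times^G \GL_n$ with its induced Atiyah connection, whose curvatures still vanish by functoriality of the Atiyah sequence in $G$. Classical Cartier descent makes $\rho_*(P)$ horizontally trivializable étale-locally on $X^{(p)}$. The original $G$-reduction $P \hookrightarrow \rho_*(P)$ is encoded by a horizontal section of the associated $\GL_n/G$-bundle, which after horizontal trivialization of $\rho_*(P)$ becomes a horizontal section of the constant bundle $(\GL_n/G)_X$.

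The main obstacle is to show that such a horizontal section of $(\GL_n/G)_X$ is étale-locally on $X^{(p)}$ constant. By Chevalley's theorem $\GL_n/G$ is quasi-projective, so the section factors through an affine open $U \subset \GL_n/G$; classical Cartier descent applied to $\mathcal O_{X \times U}$ identifies its horizontal $X$-sections with sections over $X^{(p)}$, which become constant after a further étale localization on $X^{(p)}$. Translating by a suitable element of $\GL_n(X^{(p)})$ moves this constant value to the base point $e \cdot G \in \GL_n/G$, furnishing the desired local horizontal isomorphism $(P, s) \simeq (X \times G, s_e)$. Gluing these local trivializations by means of the full faithfulness established above produces the required $G$-torsor $Q \in \B G(X^{(p)})$ with $(F_{X/S}^* Q, s_{can}) \simeq (P, s)$.
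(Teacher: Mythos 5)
The paper does not actually prove Theorem \ref{thm:cartier-descent-torsors}: it defers to \cite{mammeriDescenteCartierTorseurs2016} for a Tannakian proof and to \cite{chen-zhu:non_abelian} for a direct one via connections on the algebras of the torsors, remarking only that both reduce to the $G=\GL_n$ case. Your architecture (full faithfulness via Cartier descent applied to the affine algebra of the $\Isom$-torsor; essential surjectivity by étale descent plus local horizontal triviality, proved by pushing forward along an embedding $G\hookrightarrow\GL_n$) is exactly in the spirit of the second cited proof, and the skeleton is sound. Note that you correctly avoid circularity: you do not invoke Proposition \ref{prop:local_structure_torsors_vanishing_curvatures}, which the paper deduces \emph{from} the theorem, but reprove its content directly. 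What you leave implicit, and what constitutes much of the technical content of the cited proofs, are the compatibilities: that $s_{can}$ on $F_{X/S}^*P$ corresponds to $\nabla_{can}$ on $F_{X/S}^*$ of the pushforward algebra, so that ``morphism in $\At(G,X)$'' really is ``$\nabla_{can}$-horizontal algebra section''; that extension of structure group along $\rho$ preserves the vanishing of $C$ and $\psi$; and that a $G$-connection is determined by the induced $\GL_n$-connection together with the reduction $P\subset\rho_*(P)$.

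There is, however, one step that is false as written: the claim that the descended section of $X^{(p)}\times(\GL_n/G)$ ``becomes constant after a further étale localization on $X^{(p)}$.'' Cartier descent applied to $\mathcal O_X\otimes_k\mathcal O_U$ does identify horizontal $X$-sections with morphisms $X^{(p)}\to U\subset\GL_n/G$, but such a morphism is an arbitrary morphism to a (generally positive-dimensional) variety and is not étale-locally constant --- already for $G=B\subset\GL_2$ a Borel, a reduction of the trivial rank-$2$ bundle given by a non-constant sub-line-bundle $\mathcal O\cdot(1,t)\subset\mathcal O^2$ on $\mathbb A^1$ descends to a non-constant map to $\mathbb P^1$. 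What is true, and what your final translation step actually requires, is that the morphism $\tau:X^{(p)}\to\GL_n/G$ \emph{lifts} to $\GL_n$ after étale localization on $X^{(p)}$, because $\GL_n\to\GL_n/G$ is a smooth surjection ($G$ being smooth, which is where that hypothesis enters); translating by the inverse of such a lift then carries the section to the constant base-point section, or equivalently, the $G$-torsor $Q=\tau^*(\GL_n\to\GL_n/G)$ on $X^{(p)}$ is étale-locally trivial and satisfies $F_{X/S}^*Q\simeq(P,s)$. With ``becomes constant'' replaced by ``lifts étale-locally along the smooth quotient map,'' your argument goes through; note also that even a genuinely constant value in $(\GL_n/G)(k)$ need not lift to $\GL_n(k)$ over a non-separably-closed field, so the lifting step cannot be skipped even in that case.
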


The aim of this note is not to discuss a proof of this theorem, but to give some applications. For a tannakian proof, see \cite{mammeriDescenteCartierTorseurs2016}. For a sketch of a more direct proof, using connexions on the algebras of the torsors, see \cite{chen-zhu:non_abelian}. Both proofs ultimately rely on the $G=\GL_n$ case.

\subsection{Torsors with vanishing curvatures}%
\label{sub:Torsors with vanishing curvatures}

We give a first application of Theorem \ref{thm:cartier-descent-torsors} to the local structure of connections on torsors with vanishing curvature and vanishing $p$-curvature.

\begin{proposition}
\label{prop:local_structure_torsors_vanishing_curvatures}
With the hypothesis of Theorem \ref{thm:cartier-descent-torsors}, an Atiyah-connection $s$ on a $G$-torsor $p:P\to X$ verifies $C_s=\psi_s=0$ if and only if étale locally on $X$, there exists an isomorphism $(P,s)\simeq (X\times G,s_e)$.
\end{proposition}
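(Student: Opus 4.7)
I prove the two implications separately.

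For the ``if'' direction, I first verify directly that the trivial connection $s_e$ on $X\times G$ satisfies $C_{s_e}=\psi_{s_e}=0$. Indeed, $s_e(D)$ is the $G$-invariant derivation $\mathrm{pr}_1^*D$ on $X\times G$ pulled back from the first projection, and pullback of derivations along a morphism of $k$-schemes commutes with both the Lie bracket and the $p$-th power. Assuming $(P,s)\simeq (X\times G, s_e)$ étale-locally, Lemma \ref{lem:(p)-curvature_zero_etale_local} then gives $C_s=\psi_s=0$ globally (note that integrability $C_s=0$ is itself étale-local, so the hypothesis of that lemma is satisfied for the $p$-curvature statement).

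For the ``only if'' direction, apply Theorem \ref{thm:cartier-descent-torsors} to write $(P,s)\simeq (F_{X/S}^*P',s_{can})$ for some $G$-torsor $P'\to X^{(p)}$. Since $G/k$ is smooth, $P'$ admits an étale trivializing cover $U'\to X^{(p)}$, with $P'|_{U'}\simeq U'\times G$. Setting $U:=X\times_{X^{(p)}}U'\to X$, which is étale surjective, base change supplies a trivialization $(F_{X/S}^*P')|_U\simeq U\times G$ of torsors. The substantive step is to identify the restricted connection $s_{can}|_U$ with the trivial connection $s_e$ on $U\times G$.

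This reduces to the following computation: the canonical connection on $F_{X/S}^*$ of a trivial torsor is $s_e$. Taking $P'=X^{(p)}\times G$ so that $F_{X/S}^*P'=X\times G$, I would trace the diagram in the proof of Lemma \ref{lem:canonical_connection}. The central vertical map is induced by the scheme morphism $(F_{X/S},\mathrm{id}):X\times G\to X^{(p)}\times G$, whose derivative vanishes on the horizontal factor because $\mathrm dF_{X/S}=0$. The section $s_e(D)=\mathrm{pr}_1^*D$ is therefore sent to zero, so the retraction used to construct $s_{can}$ kills $s_e(\mathcal T_X)$; extracting the splitting via any lift yields $s_{can}(D)=s_e(D)$. Étale base change compatibility (the Atiyah sequence, the Frobenius, and the canonical connection all pass through étale pullback on $X$) propagates this identification from $X$ to $U$, concluding the proof. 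I do not foresee any serious obstacle beyond this routine bookkeeping.
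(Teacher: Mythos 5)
Your proof is correct and follows essentially the same route as the paper: étale-locality of the vanishing of the curvatures (Lemma \ref{lem:(p)-curvature_zero_etale_local}) for the ``if'' direction, and Frobenius descent (Theorem \ref{thm:cartier-descent-torsors}) plus smoothness of $G$ to produce an étale trivializing cover for the ``only if'' direction. The only difference is that you spell out the identification $(F_{X/S}^*(X^{(p)}\times G),s_{can})=(X\times G,s_e)$, which the paper leaves implicit in the phrase ``by applying the functor $(F_{X/S}^*\cdot,s_{can})$''; your verification of it via the construction in Lemma \ref{lem:canonical_connection} is accurate.
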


\begin{proof}
	The ``if'' part is clear, since the cancellation of the ($p$-)curvature is a property that is étale-local (see Lemma \ref{lem:(p)-curvature_zero_etale_local}). For the ``only if'' direction: if we assume that $C_s=\psi_s=0$, then by Theorem \ref{thm:cartier-descent-torsors}, $P\to X$ descends along $F_{X/S}:X\to X^{(p)}$ into a $G$-torsor $\widetilde{P}\to X^{(p)}$. Since $G$ is smooth, there exists an étale covering $(X^{(p)}_i\to X^{(p)} )_{i\in I}$ such that the $G$-torsor $\widetilde{P}\to X^{(p)}$ is trivial when restricted along $X^{(p)}_i\to X^{(p)}$, for each $i\in I$. By applying the functor $(F_{X/S}^*\cdot, s_{can})$, we see that $(P,s)\simeq (G_X,s_e)$ when restricted along each arrow of the pull-back étale cover $(X_i\to X)_{i\in I}$ of $X$.
\end{proof}

\begin{corollary}
	\label{cor:image_MC_torsors_vanishing_curvatures}
Let $\Omega \in \h^0(X,\Omega^1_{X/k}\otimes_{k}\Lie(G))$. Then the Atiyah-connection $s_e+\Omega$ on $X\times G\to G$ has vanishing curvature and $p$-curvature if and only if étale locally on $X$, $\Omega$ is in the image of the Maurer-Cartan map.
\end{corollary}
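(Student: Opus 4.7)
The plan is to chain together Proposition \ref{prop:local_structure_torsors_vanishing_curvatures}, which controls the étale-local form of $(X\times G,s_e+\Omega)$, with Lemma \ref{lem:image_of_MC}, which translates the isomorphism $(X\times G,s_e+\Omega)\simeq (X\times G,s_e)$ into membership of $\Omega$ in the image of $\dd\log$. The two directions are then essentially formal; the only care to take is the order of application of Lemma \ref{lem:(p)-curvature_zero_etale_local} in the converse direction.

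For the ``only if'' direction, suppose $C_{s_e+\Omega}=\psi_{s_e+\Omega}=0$. Applying Proposition \ref{prop:local_structure_torsors_vanishing_curvatures} to the torsor $\pr_1:X\times G\to X$ equipped with the connection $s_e+\Omega$, we obtain an étale cover $(u_i:X_i\to X)_{i\in I}$ and isomorphisms $(X_i\times G, s_e+u_i^*\Omega)\simeq (X_i\times G, s_e)$. By Lemma \ref{lem:image_of_MC} this is exactly the statement that $u_i^*\Omega$ lies in the image of the Maurer-Cartan map $\dd\log:G(X_i)\to \h^0(X_i,\Omega^1_{X_i/k}\otimes_{k}\Lie(G))$ for each $i$.

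For the ``if'' direction, fix an étale cover $(u_i:X_i\to X)_{i\in I}$ and elements $g_i\in G(X_i)$ with $u_i^*\Omega=\Omega_{g_i}$. Lemma \ref{lem:image_of_MC} gives isomorphisms $(X_i\times G, s_e+u_i^*\Omega)\simeq (X_i\times G, s_e)$. The connection $s_e$ has vanishing curvature and $p$-curvature: indeed, it is the image of the trivial $G$-torsor on $X^{(p)}$ under the functor $(F_{X/S}^*\cdot, s_{can})$ of Lemma \ref{lem:canonical_connection}, and Theorem \ref{thm:cartier-descent-torsors} ensures that such connections lie in $\At(G,X)^{C=\psi=0}$. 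Pulling this vanishing back through the isomorphisms, we get $u_i^*C_{s_e+\Omega}=0$ and $u_i^*\psi_{s_e+\Omega}=0$ for each $i$.

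It remains to descend these local vanishings. First, Lemma \ref{lem:(p)-curvature_zero_etale_local} gives $C_{s_e+\Omega}=0$ globally, since the vanishing of the curvature is an étale-local property with no integrability hypothesis. Once this is known, $s_e+\Omega$ is integrable, so the second half of Lemma \ref{lem:(p)-curvature_zero_etale_local} applies and yields $\psi_{s_e+\Omega}=0$ globally. The only subtlety worth flagging is precisely this two-step descent: one must establish global flatness before invoking étale-local descent for the $p$-curvature, since that part of Lemma \ref{lem:(p)-curvature_zero_etale_local} is conditional on integrability.
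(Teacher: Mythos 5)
Your proof is correct and follows essentially the same route as the paper, which simply chains Proposition \ref{prop:local_structure_torsors_vanishing_curvatures} with Lemma \ref{lem:image_of_MC}; the ``if'' half of your argument (vanishing of the curvatures of $s_e$, pullback through the local isomorphisms, then descent via Lemma \ref{lem:(p)-curvature_zero_etale_local}, curvature first so that integrability is available for the $p$-curvature) is just an unfolding of the ``if'' direction already contained in that Proposition. The extra care you take with the order of the two descent steps is a reasonable point but does not change the argument.
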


\begin{proof}
	This follows from Proposition \ref{prop:local_structure_torsors_vanishing_curvatures} and Lemma \ref{lem:image_of_MC}.
\end{proof}

\subsection{Special groups}
\label{sub:special_groups}

Let $G/k$ be a group scheme, and $X$ a scheme. We recall that a fpqc $G$-torsor $P\to X$ is called \emph{locally isotrivial} if for any point $x\in X$, there exists a Zariski open $U\subset X$ containing $x$, and a finite étale cover $U'\to U$, such that $U'\times_X P \to U'$ is trivial as a $G$-torsor. In fact, when $G/k$ is a smooth affine group scheme, any fpqc torsor is locally isotrivial (see \cite[Lemme XIV.1.4]{raynaud:faisceaux_amples}).

Serre in \cite[\S 4.4]{serre:espaces_fibres} (see also \cite[\S 3]{grothendieck:torsion}) defines a \emph{special group} as a group scheme $G/k$ such that any fpqc $G$-torsor that is locally isotrivial is already locally trivial in the Zariski topology (the definition is given for an algebraically closed field but makes sense over an arbitrary field). For a modern treatment, see \cite{reichsteinSpecialGroupsVersality2020}. Classical examples include $\mathbb G_m$ (more generally $\GL_n$) and $\mathbb G_a$.

If $G/k$ is a smooth affine group scheme, that is special in the above sense, then 
Proposition \ref{prop:local_structure_torsors_vanishing_curvatures} and
Corollary \ref{cor:image_MC_torsors_vanishing_curvatures} hold verbatim if we replace the word `\'etale' by the word `Zariski'.

\subsection{Torsors trivialized along the relative Frobenius}%
\label{sub:Torsors trivialized along the relative Frobenius}

\begin{definition}[]
\label{def:torsors_trivialized_relative_Frobenius}
We define the category $\B  G(X^{(p)})^{F_{X/S}-triv}$ as the $2$-product fitting in the following diagram~:

\[ 
\begin{tikzcd}
	\B  G(X^{(p)})\ar[r,"F_{X/S}^*"] & \B G (X)\\
	\B  G(X^{(p)})^{F_{X/S}-triv}      \ar[r]\ar[u,""] & * \ar[u,"X\times G\to X "']
\end{tikzcd}
\]
where $*$ denotes the discrete category with a single object.
 
\end{definition}

\subsection{Atiyah connections on trivialized bundles}%
\label{sub:Atiyah connections on trivialized bundles}

\begin{definition}
\label{def:atiyah_connection_infinitesimal}
 Similarly, we define the category $\At^{triv}(G,X)$ as the $2$-product fitting in the following diagram :

\[ 
\begin{tikzcd}
	\At(G,X)\ar[r] & \B G(X)\\
\At^{triv}(G,X)\ar[r]\ar[u,""] & * \ar[u,"X\times G\to X "']
\end{tikzcd}
\]

\end{definition}

It is clear that $\At^{triv}(G,X)$ is a discrete category, more precisely: the functor
\begin{equation}
\label{eq:discrete-equivalence}
s_e+ \cdot: \h^0(X,\Omega^1_{X/k}\otimes_{k}\Lie(G)) \to \At^{triv}(G,X)
\end{equation}
sending $\Omega$ to $((X\times G\to X,s_e+\Omega),\id)$ is an equivalence of discrete categories, where we see the set on the left as a category with only identities as morphisms. There is even an explicit inverse equivalence given by transfer of structure.

We denote by $\At^{triv}(G,X)^{C=\psi=0}$ the full subcategory of $\At^{triv}(G,X)$ consisting of connections with vanishing curvatures, and similarly $\h^0(X,\Omega^1_{X/k}\otimes_{k}\Lie(G))^{C=\psi=0}$ for the corresponding Lie-valued $1$-forms. It follows from Theorem \ref{thm:cartier-descent-torsors} that the functor $(F_{X/S}^*\cdot, s_{can})$  induces an equivalence of categories: 
\[ (F_{X/S}^*\cdot, s_{can}) : \B G(X^{(p)})^{F_{X/S}-triv} \rightarrow \At^{triv}(G,X)^{C=\psi=0} \]

\subsection{Torsors trivialized along the absolute Frobenius}%
\label{sub:Torsors trivialized along the absolute Frobenius}

\begin{definition}[]
\label{def:torsors_trivialized_absolute_Frobenius}
We define the category $\B  G(X)^{F_{X}-triv}$ as the $2$-product fitting in the following diagram~:

\[ 
\begin{tikzcd}
	\B  G(X)\ar[r,"F_{X}^*"] & \B G^{(p)} (X)\\
	\B  G(X)^{F_{X}-triv}      \ar[r]\ar[u,""] & * \ar[u,"X\times G^{(p)}\to X "']
\end{tikzcd}
\]

\end{definition}

We will see a natural interpretation of this definition in section \ref{sec:Torsors under Frobenius kernels} but first use it in section \ref{sec:Affine line bundles trivialized along the Frobenius}. When the base field is perfect, it is clear that the functor $\B  G(X)^{F_{X}-triv} \rightarrow \B G^{(p)}(X^{(p)})^{F_{X/S}-triv}$ is an equivalence, so that we get a natural equivalence $\B  G(X)^{F_{X}-triv} \rightarrow \h^0(X,\Omega^1_{X/k}\otimes_{k}\Lie(G^{(p)}))^{C=\psi=0}$.

\section{Affine line bundles trivialized along the Frobenius}%
\label{sec:Affine line bundles trivialized along the Frobenius}

\subsection{Affine line bundles}%
\label{sub:Affine line bundles}

\begin{definition}[]
\label{def:affine_line_bundle}
An affine line bundle over a scheme $S$ is a couple $(L,A)$ where $L$ is a line bundle and $A$ is a torsor under $L$.
\end{definition}

The morphisms are the pairs $(\phi,\psi): (L,A)\to (L',A')$ so that $\phi :L\to L'$ is linear and $\psi :A\to A'$ is $\phi$-equivariant.
 
Since each object is Zariski locally isomorphic to the trivial object $(\mathbb G_a, \mathbb A^1)$, affine line bundles form a neutral gerbe banded by $\Aut(\mathbb G_a, \mathbb A^1)=\mathbb G_a \ltimes \mathbb G_m$. This \emph{affine group} will be denoted by $aff(1)$.

\begin{remark}
\label{rem:affine_line_bundle}

V. Plechinger gives another description of this category as the category of epimorphisms $\mathcal E\twoheadrightarrow \mathcal O_S$ where $\mathcal E$ is a rank $2$ vector bundle on $S$, see \cite{plechingerEspacesModulesFibres2019, plechingerClassifyingAffineLine2019}. It is easy to work out an explicit equivalence.
 
\end{remark}

\subsection{de Rham interpretation}%
\label{sub:de Rham interpretation}

We are now back to our main setup where $X/k$ is a smooth variety.
After a brief reminder of the cases $G=\mathbb G_m$ and $G=\mathbb G_a$, we give an interpretation of affine line bundles (trivialized by the Frobenius) in terms of differential forms.

\subsubsection{Cartier operator}%
\label{ssub:Cartier operator}

We denote by $\C: \h^0(X, \Z(\Omega^1_{X/k})) \rightarrow \h^0(X^{(p)},\Omega^1_{X^{(p)}/k})$ the Cartier operator. For its definition, we refer for instance to L.Illusie's article in \cite{bertinIntroductionTheorieHodge1996a}, in particular, to \S 3 therein. More generally, the Cartier operator defines an isomorphism

$$ \C: \oplus_{i\geq 0}\mathcal H^i\left( (F_{X/k})_* \Omega^{\bullet}_{X/k}\right) \rightarrow \oplus_{i\geq 0}\Omega^i_{X^{(p)}/k} \;\; .	$$

	 \subsubsection{Line bundles and \texorpdfstring{$1$}{1}-forms}%
\label{ssub:line_bundles_and_1_forms}

\begin{proposition}[{\cite[Corollary 7.1.3]{katzAlgebraicSolutionsDifferential1972}}]
\label{prop:line_bundles_and_1_forms}

Let $\omega \in \h^0(X, \Z(\Omega^1_{X/k}))$ be a closed $1$-form. The following statements are equivalent:
\begin{enumerate}
	\item  \label{Gmfirst} $\psi(\mathcal O_X, \mathrm{d}+\omega)=0$,
	\item \label{Gmsecond} $\C \omega = \pi_{X/k}^*\omega$,
	\item \label{Gmthird} $\omega$ is locally logarithmic.
\end{enumerate}
\end{proposition}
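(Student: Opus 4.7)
The plan is to derive the three equivalences by running the cycle $(3) \Rightarrow (1) \Rightarrow (3)$ through Frobenius descent, in the spirit of \S\ref{sub:Cartier descent enters the scene}, and then to link condition $(2)$ to the others via the explicit formula for the $p$-curvature of a rank-one closed connection.

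The implication $(3) \Rightarrow (1)$ is Zariski-local by Lemma \ref{lem:(p)-curvature_zero_etale_local}, so one may assume $\omega = \dd f / f$ with $f \in \mathcal O_X^\times$. Then multiplication by $f^{-1}$ is an isomorphism $(\mathcal O_X, \dd) \xrightarrow{\sim} (\mathcal O_X, \dd + \omega)$ of line bundles with connection, since $f^{-1}$ is horizontal; the trivial connection coincides with the canonical connection on $F_{X/S}^* \mathcal O_{X^{(p)}}$, so its $p$-curvature vanishes, and hence so does that of $(\mathcal O_X, \dd + \omega)$. For the converse $(1) \Rightarrow (3)$, the connection $(\mathcal O_X, \dd + \omega)$ has vanishing curvature $\dd\omega$ (by hypothesis) and vanishing $p$-curvature, so by Frobenius descent (Theorem \ref{thm:cartier-descent-torsors} applied to $G = \mathbb G_m$) it descends to a line bundle $\mathcal L$ on $X^{(p)}$ with $(F_{X/S}^*\mathcal L, \nabla_{can}) \simeq (\mathcal O_X, \dd+\omega)$. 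Zariski-locally on $X^{(p)}$, $\mathcal L$ admits a nowhere-vanishing section $s$, whose pullback $f := F_{X/S}^* s$ is then a horizontal unit of $(\mathcal O_X, \dd + \omega)$; from $\dd f + \omega f = 0$ one reads off $\omega = -\dd f / f$, locally logarithmic.

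It remains to connect condition $(2)$ to the others. The cleanest route is to establish, by a local computation in étale coordinates, the identity
\[ \psi_{(\mathcal O_X, \dd + \omega)} = \pi_{X/k}^* \omega - \C \omega \]
of sections of $F_{X/S}^* \Omega^1_{X^{(p)}/k}$, from which $(1) \Leftrightarrow (2)$ is immediate. I expect this identification to be the main obstacle of the proof: the verification proceeds by expanding $(D + \omega(D))^p$ via Jacobson's formula for $p$-th powers in characteristic $p$, applying the result to $1 \in \mathcal O_X$, and matching terms with the defining formula of the Cartier operator on standard forms $a\,\dd x_1 \wedge \cdots$. A less computational alternative is to argue $(2) \Leftrightarrow (3)$ directly, using Cartier's classical identity $\C(\dd f / f) = \pi_{X/k}^* (\dd f / f)$ for the easy direction and, for the converse, the bijection afforded by the Cartier isomorphism between $\mathcal H^1((F_{X/k})_* \Omega^\bullet_{X/k})$ and $\Omega^1_{X^{(p)}/k}$ together with the local classification of logarithmic classes modulo exact forms.
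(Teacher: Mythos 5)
Your proposal is correct and follows essentially the same route as the paper: the equivalence $(1)\Leftrightarrow(3)$ via Frobenius descent (which the paper packages as Corollary \ref{cor:image_MC_torsors_vanishing_curvatures}, specialized to the special group $\mathbb G_m$), and $(1)\Leftrightarrow(2)$ via the identity $\psi_{(\mathcal O_X,\dd+\omega)}=\pi_{X/k}^*\omega-\C\omega$, which the paper does not reprove but cites directly from \cite[Proposition 7.1.2]{katzAlgebraicSolutionsDifferential1972} — exactly the Jacobson-formula computation you outline.
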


\begin{remark}
\label{rem:line_bundles_and_1_forms}
Here $\pi_{X/k}:X^{(p)}\rightarrow X$ denotes the canonical morphism (see Appendix \ref{sec:classical_frobenius_descent} for details). When $X$ is perfect, we identify the isomorphism $\pi_{X/k}$ with $\id_X$ and write the above equality as $\C \omega = \omega$. 
\end{remark}

\begin{proof}
	The assertions \ref{Gmfirst} and \ref{Gmthird} are equivalent by Corollary \ref{cor:image_MC_torsors_vanishing_curvatures}. The equivalence of \ref{Gmfirst} and \ref{Gmsecond} follows from a direct computation, see \cite[Proposition 7.1.2]{katzAlgebraicSolutionsDifferential1972}   
\end{proof}

\subsubsection{\texorpdfstring{$\mathbb G_a$}{Ga}-torsors and \texorpdfstring{$1$}{1}-forms}%
\label{ssub:G_a-torsors_and_1-forms}

Since $\Lie(\mathbb G_a)=k$, any $1$-form $\omega$ also defines a connection on the trivial $\mathbb G_a$-torsor. The analogue of Proposition \ref{prop:line_bundles_and_1_forms} is well known:

\begin{proposition}
\label{prop:G_a-torsors_and_1_forms}

Let $\omega \in \h^0(X, \Z(\Omega^1_{X/k}))$ be a closed $1$-form. The following statements are equivalent:
\begin{enumerate}
	\item  \label{Gafirst} $\psi(\mathbb G_a, \mathrm{d}+\omega)=0$,
	\item \label{Gasecond} $\C \omega = 0$,
	\item \label{Gathird} $\omega$ is locally exact.
\end{enumerate}
\end{proposition}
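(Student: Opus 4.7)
The proof will mirror the structure of Proposition \ref{prop:line_bundles_and_1_forms}, using Corollary \ref{cor:image_MC_torsors_vanishing_curvatures} for the group $G=\mathbb G_a$ to handle the equivalence $\ref{Gafirst} \Leftrightarrow \ref{Gathird}$, and the Cartier isomorphism in degree one to handle $\ref{Gasecond} \Leftrightarrow \ref{Gathird}$. Since $\mathbb G_a$ is special (see \S \ref{sub:special_groups}), the adjective ``local'' in the argument can be taken in the Zariski sense throughout, matching the wording of condition \ref{Gathird}.

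For $\ref{Gafirst} \Leftrightarrow \ref{Gathird}$, I first observe that since $\mathbb G_a$ is abelian, $\Lie(\mathbb G_a)=k$ sits in the center (and $\omega\wedge \omega = 0$), so Lemma \ref{lem:curvature-GL} via the embedding $\mathbb G_a\hookrightarrow \GL_2$ gives $C_{s_e+\omega} = \mathrm d \omega$; the hypothesis that $\omega$ is closed therefore forces $C=0$ automatically. Corollary \ref{cor:image_MC_torsors_vanishing_curvatures}, combined with the fact that $\mathbb G_a$ is special, then says that $\psi(\mathbb G_a,\mathrm d + \omega)=0$ if and only if Zariski-locally $\omega$ lies in the image of the Maurer-Cartan map for $\mathbb G_a$. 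By the formula recorded after Lemma \ref{lem:maurer-cartan-GL}, this map is $a\mapsto \mathrm d a$, so its image consists precisely of the exact $1$-forms; thus $\psi=0$ is equivalent to $\omega$ being locally exact.

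For $\ref{Gasecond} \Leftrightarrow \ref{Gathird}$, I would appeal to the Cartier isomorphism recalled in \S \ref{ssub:Cartier operator}: the composition
\[
\ker\bigl(\mathrm d:\Omega^1_{X/k}\to \Omega^2_{X/k}\bigr)\big/\operatorname{im}(\mathrm d) \;\xrightarrow{\;\sim\;}\; \Omega^1_{X^{(p)}/k}
\]
induced by $\C$ is an isomorphism of sheaves on $X^{(p)}$. Consequently, for a closed form $\omega$, the equation $\C\omega=0$ holds globally on $X^{(p)}$ if and only if $\omega$ is locally exact on $X$. This is essentially the content of \cite[Proposition 7.1.2]{katzAlgebraicSolutionsDifferential1972} specialized to the additive case; one could also verify it by the same direct local computation used in the $\mathbb G_m$-version, where one writes $\omega = \mathrm d a$ locally and checks $\C(\mathrm d a)=0$ from the defining formula of $\C$.

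I do not expect any real obstacle here: the only subtle point is keeping track of the Frobenius twist (so that the identity $\C\omega=0$ is meaningful on $X^{(p)}$ rather than on $X$), which is the analogue of Remark \ref{rem:line_bundles_and_1_forms} and is simply the statement that $0$ pulls back to $0$ under $\pi_{X/k}$, so no perfectness hypothesis is needed for condition \ref{Gasecond}. All other steps are direct transcriptions of the multiplicative argument.
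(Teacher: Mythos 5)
Your proof is correct and follows essentially the same route as the paper: the equivalence of \ref{Gafirst} and \ref{Gathird} via Corollary \ref{cor:image_MC_torsors_vanishing_curvatures} (with the Maurer--Cartan map for $\mathbb G_a$ being $a\mapsto \mathrm d a$), and the equivalence of \ref{Gasecond} and \ref{Gathird} via the Cartier isomorphism. The paper's proof is just a terser version of yours, citing Cartier's theorem from Illusie's notes for the second equivalence.
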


\begin{proof}
	Again, the assertions \ref{Gafirst} and \ref{Gathird} are equivalent by Corollary \ref{cor:image_MC_torsors_vanishing_curvatures}. The equivalence of \ref{Gasecond} and \ref{Gathird} follows from Cartier's theorem (\cite[Théorème 3.5,p.130]{bertinIntroductionTheorieHodge1996a}). 
\end{proof}

\subsubsection{de Rham interpretation of affine line bundles}%
\label{ssub:de_Rham_interpretation_of_affine_line_bundles}

For the rest of this section we set $G=aff(1)$. This is a special group, so according to \S \ref{sub:special_groups} we can work with the Zariski topology rather than the étale topology. 
 
 We use the standard embedding $G\to \GL_2$ given by $(a,b)\mapsto
 \begin{pmatrix}
	 a & b\\ 0 &1
 \end{pmatrix}$
 . Thus $\Lie{G}$ identifies with $\left \{\begin{pmatrix}
	 a & b\\ 0 &0
 \end{pmatrix}, a,b \in k \right\}$, hence connections on the trivial $G$-torsor are given by matrices $$\Omega=\begin{pmatrix}
	 \omega & \omega'\\ 0 &0
 \end{pmatrix}$$
 where $\omega, \omega'$ are two $1$-forms on $X$. Lemma \ref{lem:curvature-GL} gives immediately :
$$C_{s_e+\Omega}=\begin{pmatrix}
	\mathrm{d} \omega & \mathrm{d} \omega' +\omega\wedge \omega'\\ 0 &0
 \end{pmatrix}$$

\begin{proposition}
\label{prop:Gm_rt_Ga_vanishing_p-curvature}
With notations as above, assume that $\mathrm{d} \omega =0 = \mathrm{d} \omega' +\omega\wedge \omega'$. Then $\psi_{s_e+\Omega}=0$ if and only if $\C \omega = \pi_{X/k}^*\omega$ and for each Zariski open $U\subset X$ and each $f\in \mathbb G_m(U)$ such that $\omega_{|U}=\mathrm{d}f/f$, the equality $\C (f\omega'_{|U})=0$ holds.
\end{proposition}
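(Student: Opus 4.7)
The plan is to invoke Corollary \ref{cor:image_MC_torsors_vanishing_curvatures} for the special group $G = aff(1)$: under the assumption $C_{s_e + \Omega} = 0$ (which is exactly the two equations $\mathrm{d}\omega = 0 = \mathrm{d}\omega' + \omega\wedge\omega'$ built into the statement), the vanishing of $\psi_{s_e+\Omega}$ is, via \S\ref{sub:special_groups}, equivalent to $\Omega$ being \emph{Zariski}-locally in the image of the Maurer-Cartan map $\dd\log$. The task therefore reduces to rephrasing this local integrability condition in terms of the Cartier operator.

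First I would compute the image of $\dd\log$ for $aff(1)$ explicitly. Using the embedding $aff(1) \hookrightarrow \GL_2$, $(a,b) \mapsto \begin{pmatrix} a & b \\ 0 & 1 \end{pmatrix}$, functoriality of $\dd\log$ together with Lemma \ref{lem:maurer-cartan-GL} give
\[ \dd\log(a,b) = g^{-1}\dd g = \begin{pmatrix} \dd a/a & \dd b/a \\ 0 & 0 \end{pmatrix}. \]
Hence $(\omega,\omega')$ is Zariski-locally in the image of $\dd\log$ if and only if, Zariski-locally on $X$, there exist $a \in \mathbb{G}_m$ and $b \in \mathbb{G}_a$ with $\omega = \dd a/a$ and $a\omega' = \dd b$; in words, $\omega$ is locally logarithmic \emph{and} any local logarithmic-potential $f$ of $\omega$ makes $f\omega'$ locally exact.

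The next step is to convert these two local conditions into Cartier-operator conditions. The statement ``$\omega$ is Zariski-locally of the form $\dd a/a$'' is, by Proposition \ref{prop:line_bundles_and_1_forms}, equivalent to $\C\omega = \pi_{X/k}^*\omega$. For the second condition, given any local $f \in \mathbb{G}_m(U)$ with $\omega|_U = \dd f/f$, the curvature hypothesis $\dd\omega' + \omega\wedge\omega' = 0$ yields
\[ \dd(f\omega') = \dd f \wedge \omega' + f\,\dd\omega' = f\omega\wedge\omega' + f\,\dd\omega' = 0, \]
so $f\omega'$ is closed; Proposition \ref{prop:G_a-torsors_and_1_forms} then identifies the Zariski-local exactness of $f\omega'|_U$ with the vanishing $\C(f\omega'|_U) = 0$.

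The one nuance that deserves care --- and what I expect to be the main obstacle --- is the passage from the existence of \emph{some} $f$ (as produced by Corollary \ref{cor:image_MC_torsors_vanishing_curvatures}) to the universal condition ``for \emph{every} Zariski open $U$ and every $f \in \mathbb{G}_m(U)$ with $\omega|_U = \dd f/f$'' demanded by the statement. If $\omega|_U = \dd f/f = \dd f'/f'$, then $c := f/f'$ is a unit with $\dd c = 0$, so whenever $f'\omega' = \dd b'$ one has $f\omega' = c\,\dd b' = \dd(cb')$ by Leibniz, preserving exactness. Thus the Cartier condition is genuinely intrinsic to $(\omega,\omega')$, and both directions of the equivalence follow.
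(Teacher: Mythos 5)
Your proof is correct and follows essentially the same route as the paper: reduce via Corollary \ref{cor:image_MC_torsors_vanishing_curvatures} (together with the special-group remark of \S\ref{sub:special_groups}) to Zariski-local membership of $\Omega$ in the image of $\dd \log$, compute that image explicitly for $aff(1)$ via the $\GL_2$ embedding, and translate through Propositions \ref{prop:line_bundles_and_1_forms} and \ref{prop:G_a-torsors_and_1_forms}. The only cosmetic difference is in the independence-of-$f$ step: the paper writes $f=F_{X/S}^*(f'')f_i$ on overlaps and uses the $p^{-1}$-linearity of $\C$, whereas you multiply the exact form by the $\dd$-closed unit $f/f_i$ and invoke the Leibniz rule --- the two arguments are equivalent.
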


\begin{remark}
\label{rem:Gm_rt_Ga_vanishing_p-curvature}
	The condition $\C (f\omega'_{|U})=0$ makes sense because $\mathrm{d} \omega' +\omega\wedge \omega'=0$ ensures that $f\omega'_{|U}$ is closed.
 
\end{remark}

\begin{proof}[Proof of Proposition \ref{prop:Gm_rt_Ga_vanishing_p-curvature}]
	Frobenius descent, and in particular Corollary \ref{cor:image_MC_torsors_vanishing_curvatures}, implies that $\psi_{s_e+\Omega}=0$ if and only if $\Omega$ is Zariski locally in the image of the Maurer-Cartan map. If 
	$g=
	\begin{pmatrix}
		f & f' \\ 0 & 1	
	\end{pmatrix};$
 then $g^{-1} \mathrm{d}g =
\begin{pmatrix}
		f^{-1} \mathrm{d}f & -f^{-1}\mathrm{d}f' \\  & 0	
\end{pmatrix}$, so $\psi_{s_e+\Omega}=0$ if and only if Zariski locally there exists $f\in \mathbb G_m$ and $f'\in \mathbb G_a$ such that $\omega = f^{-1} \mathrm{d}f$ and $\omega'= -f^{-1}\mathrm{d}f' $. Hence the `if' direction follows from the cases $G=\mathbb G_m$ and $G=\mathbb G_a$ (Propositions \ref{prop:line_bundles_and_1_forms} and \ref{prop:G_a-torsors_and_1_forms}). For the  `only if' direction : it follows from the $G=\mathbb G_m$ case that $\C \omega = \pi_{X/k}^*\omega$. We now fix a Zariski open $U\subset X$ and $f\in \mathbb G_m(U)$ such that $\omega_{|U}=\mathrm{d}f/f$. By hypothesis, there exists a Zariski cover $(U_i)_{i\in I}$ of $X$, and for each $i\in I$, a pair $(f_i,f'_i)$ in $G(U_i)$ such that $\omega_{|U_i} = f_i^{-1} \mathrm{d}f_i$ and $\omega'_{|U_i}= -f_i^{-1}\mathrm{d}f'_i $. On each $U\cap U_i$, since we assume that $X/k$ is smooth, there exists $f''_i$ in  $\mathbb G_m((U\cap U_i)^{(p)})$ such that $f_{|U\cap U_i}=  F_{U\cap U_i/S}^*(f''_i)  {f_i}_{|U\cap U_i}$
Hence by $p^{-1}$-linearity of the Cartier operator, for each $i\in I$ we have : \( \C(f_{|U\cap U_i} \omega' _{|U\cap U_i} )= f''_i\C({f_i}_{|U\cap U_i} \omega' _{|U\cap U_i} )= 0\), which implies $\C (f\omega'_{|U})=0$.

\end{proof}

We get the following quite complete description of the set of affine line bundles on $X$ trivialized along the absolute Frobenius :

\begin{corollary}
\label{cor:Gm_rt_Ga_vanishing_p-curvature}
Assume that $X/k$ is smooth, and that $k$ is perfect. Then the set of isomorphism classes of affine line bundles on $X$ trivialized along the Frobenius is in one to one correspondence with the set of pairs $(\omega,\omega')$ of $1$-forms on $X$ such that :
\begin{enumerate}
	\item $\mathrm{d} \omega =0 = \mathrm{d} \omega' +\omega\wedge \omega'$,
	\item $\C \omega = \omega$,
	\item for each Zariski open $U\subset X$ and each $f\in \mathbb G_m(U)$ such that $\omega_{|U}=\mathrm{d}f/f$, the equality $\C (f\omega'_{|U})=0$ holds.
\end{enumerate}
\end{corollary}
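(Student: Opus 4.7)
My plan is to assemble several ingredients already available in the paper. First, I would reinterpret the left-hand side of the claimed bijection: affine line bundles are exactly $G$-torsors for $G = aff(1)$ by the discussion in \S \ref{sub:Affine line bundles}, and the condition ``trivialized along the Frobenius'' places such an object in the category $\B G(X)^{F_{X}-triv}$ of Definition \ref{def:torsors_trivialized_absolute_Frobenius}. The remark at the end of \S \ref{sub:Torsors trivialized along the absolute Frobenius} then provides, using the perfection of $k$, an equivalence between $\B G(X)^{F_{X}-triv}$ and the discrete category $\h^0(X, \Omega^1_{X/k} \otimes_k \Lie(G^{(p)}))^{C=\psi=0}$. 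Since the target is a set, isomorphism classes on the left correspond bijectively to elements on the right.

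Next, I would unpack the right-hand side using the matrix embedding $G \hookrightarrow \GL_2$ introduced above. Under this identification a Lie-valued $1$-form is a pair $(\omega,\omega')$, and the curvature formula
\[
C_{s_e+\Omega} = \begin{pmatrix} \mathrm d\omega & \mathrm d\omega' + \omega \wedge \omega' \\ 0 & 0 \end{pmatrix}
\]
displayed just before Proposition \ref{prop:Gm_rt_Ga_vanishing_p-curvature} shows that the vanishing of the ordinary curvature is precisely condition (1). Granted (1), Proposition \ref{prop:Gm_rt_Ga_vanishing_p-curvature} characterizes vanishing of the $p$-curvature by the identity $\C\omega = \pi_{X/k}^*\omega$ together with the logarithmic condition on $\omega'$; using the perfection of $k$ to identify $\pi_{X/k}^*\omega$ with $\omega$ (Remark \ref{rem:line_bundles_and_1_forms}), these yield exactly conditions (2) and (3).

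Virtually all the hard work has already been carried out in Proposition \ref{prop:Gm_rt_Ga_vanishing_p-curvature}, which disentangles the vanishing of $\psi$ into pointwise conditions on $\omega$ and $\omega'$. The corollary itself is therefore essentially bookkeeping: combine the Frobenius descent equivalence with these curvature computations. The only point one should be careful to check is that the chain of equivalences really does land on the claimed \emph{set} of pairs, which is ensured by the equivalence \eqref{eq:discrete-equivalence} expressing $\At^{triv}(G,X)$ as a discrete category. I do not foresee any serious obstacle beyond tightening this bookkeeping.
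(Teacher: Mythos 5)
Your proposal is correct and follows essentially the same route as the paper, which likewise deduces the corollary directly from the equivalence of \S\ref{sub:Torsors trivialized along the absolute Frobenius} combined with Proposition \ref{prop:Gm_rt_Ga_vanishing_p-curvature}; your expanded bookkeeping (the curvature matrix giving condition (1), the discreteness of $\At^{triv}(G,X)$, and the identification $\pi_{X/k}^*\omega=\omega$ over a perfect field) is exactly what the paper leaves implicit.
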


\begin{proof}
	This follows from section \ref{sub:Torsors trivialized along the absolute Frobenius} and Proposition \ref{prop:Gm_rt_Ga_vanishing_p-curvature}.
\end{proof}

\section{Torsors under Frobenius kernels}%
\label{sec:Torsors under Frobenius kernels}

\subsection{Frobenius kernels}%
\label{sub:Frobenius kernels}

Let $k$ be a field and $G/k$ be a smooth affine group scheme. We define its Frobenius kernel as $G^F:=\ker(G\to G^{(p)})$. In this \S \ref{sub:Frobenius kernels}, we will give a complete description of $G^F$-torsors on a smooth scheme $X/k$, when $k$ is perfect.

Lemma \ref{lem:frobenius_pull_back_of_torsors} implies that the functor $F_{X}^*:\B  G(X) \rightarrow \B  G^{(p)}(X)   $ is isomorphic with restriction of the structure group along $G\to G^{(p)}$. It follows from Definition \ref{def:torsors_trivialized_absolute_Frobenius} that there is a natural equivalence $ \B G^F(X) \rightarrow \B  G(X)^{F_{X}-triv}$. Now from \S \ref{sub:Atiyah connections on trivialized bundles} and \S \ref{sub:Torsors trivialized along the absolute Frobenius} one get a natural functor $ \B G^F(X) \rightarrow \At^{triv}(G^{(p)},X) $.

\subsection{Torsors under Frobenius kernels and Maurer-Cartan map}%
\label{sub:Torsors under Frobenius kernels and Maurer-Cartan map}

This functor $\B G^F(X) \to  \At^{triv}(G^{(p)},X)$
is strongly related to the Maurer-Cartan map in the following sense. For $g\in G^{(p)}(X)$, we define $\partial(g)=X\times_{g,G^{(p)}} G$, seen as a $G^F$-torsor on $X$.

\begin{lemma}
\label{lem:frobenius_descent_maurer_cartan}
The following diagram is $2$-commutative :

\[ 
\begin{tikzcd}
\B G^F(X)	\ar[r] & \At^{triv}(G^{(p)},X)\\
	G^{(p)}(X)\ar[r,"\dd \log"]\ar[u,"\partial"] & \h^0(X,\Omega^1_{X/k}\otimes_{k}\Lie(G^{(p)}))  \ar[u,"s_e+\cdot "']
\end{tikzcd}
\]

\end{lemma}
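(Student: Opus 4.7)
The plan is to fix $g\in G^{(p)}(X)$ and trace both paths around the square to objects of $\At^{triv}(G^{(p)},X)$. Since $\At^{triv}(G^{(p)},X)$ is discrete by the equivalence \eqref{eq:discrete-equivalence}, $2$-commutativity amounts to a strict equality of the two images.

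Going right then up is immediate from the definitions: by Definition \ref{def:Maurer-Cartan_map} one has $\Omega_g = s_{g^{-1}}-s_e$, so $(s_e+\cdot)(\dd \log(g)) = (X\times G^{(p)}, s_{g^{-1}})$, where $s_{g^{-1}}$ is the Atiyah-connection on the trivial $G^{(p)}$-torsor induced by the section $g^{-1}:X\to X\times G^{(p)}$.

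For the up-then-right composite, I first unwind the equivalence $\B G^F(X)\simeq \B G(X)^{F_X-triv}$ applied to $\partial(g) = X\times_{g,G^{(p)},F_G} G$. The multiplication map $[(x,\gamma),\delta]\mapsto (x,\gamma\delta)$ defines a $G$-equivariant isomorphism $\mu:\partial(g)\times^{G^F}G\xrightarrow{\sim} X\times G$, and a direct computation shows that under $\mu$ the canonical trivialization of $F_X^*(\partial(g)\times^{G^F}G) = \partial(g)\times^{G^F}G^{(p)} = X\times G^{(p)}$ (which exists because $G^F$ acts trivially on $G^{(p)}$ through $F_G|_{G^F}=e$) is transported to the automorphism $L_{g^{-1}}$ of $X\times G^{(p)}$. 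Hence $\partial(g)$ is represented in $\B G(X)^{F_X-triv}$ by the pair $(X\times G, L_{g^{-1}})$. Applying next the equivalence with $\B G^{(p)}(X^{(p)})^{F_{X/S}-triv}$ (available because $k$ is perfect) followed by the Frobenius descent functor $(F_{X/S}^*\cdot, s_{can})$ of Theorem \ref{thm:cartier-descent-torsors}, I use that the canonical connection on $F_{X/S}^*(X^{(p)}\times G^{(p)})$ coincides with $s_e$ (both arise from the unit section), so transporting it along $\tau'=L_{g^{-1}}$ yields $(L_{g^{-1}})_* s_e = s_{L_{g^{-1}}\circ e} = s_{g^{-1}}$. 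The two composites therefore produce the same object $(X\times G^{(p)}, s_{g^{-1}})$.

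The hard part will be the explicit identification of the twist $L_{g^{-1}}$: one must carefully track the successive quotients in $(\partial(g)\times^{G^F}G)\times^G G^{(p)} = \partial(g)\times^{G^F}G^{(p)}$ and compare the two canonical identifications of this torsor with $X\times G^{(p)}$—one coming from the trivial $G^F$-action on $G^{(p)}$ through $F_G$, the other from $\mu$ composed with the canonical identification $F_X^*(X\times G) = X\times G^{(p)}$—to see that they differ precisely by left multiplication by $g$. The remaining steps are formal consequences of the Frobenius descent machinery.
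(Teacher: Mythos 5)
Your proof is correct and follows exactly the route the paper intends: the paper reduces the lemma to ``unraveling the definitions'' and then omits the verification, whereas you actually carry it out, correctly identifying the discrepancy between the two trivializations of $\partial(g)\wedge^{G^F}G^{(p)}$ as left translation by $g$ and concluding that both composites land on $(X\times G^{(p)},s_{g^{-1}})$. The only cosmetic remark is that the computation you defer to the last paragraph as ``the hard part'' is precisely the one you already asserted (and which checks out), so it could simply be written out rather than flagged.
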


\begin{proof}

It is enough to show that the following diagram is commutative :

\[ 
\begin{tikzcd}
	\B G^F(X)	\ar[r] & \At^{triv}(G^{(p)},X) \ar[d]\\
	G^{(p)}(X)\ar[r,"\dd \log"]\ar[u,"\partial"] & \h^0(X,\Omega^1_{X/k}\otimes_{k}\Lie(G^{(p)}))  \end{tikzcd}
\]
where the right vertical functor is the inverse equivalence of \eqref{eq:discrete-equivalence}. As this is just a matter of unraveling the definitions, we skip the proof.

\end{proof}

By construction, the functor $\B G^F(X) \to  \At^{triv}(G^{(p)},X)$ factors through $\At^{triv}(G^{(p)},X)^{C=\psi=0}$. Moreover:

\begin{lemma}
\label{lem:frobenius_descent_infinitesimal}
If $k$ is perfect, the functor $\B G^F(X) \to \At^{triv}(G^{(p)},X)^{C=\psi=0}$ is an equivalence of categories.
\end{lemma}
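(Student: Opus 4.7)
The plan is to exhibit the functor $\B G^F(X) \to \At^{triv}(G^{(p)},X)^{C=\psi=0}$ as the composite of three equivalences already assembled in \S \ref{sec:frobenius_descent_for_torsors} and \S \ref{sub:Frobenius kernels}. Concretely, I would factor it as
\[ \B G^F(X) \xrightarrow{\alpha} \B G(X)^{F_{X}-triv} \xrightarrow{\beta} \B G^{(p)}(X^{(p)})^{F_{X/S}-triv} \xrightarrow{\gamma} \At^{triv}(G^{(p)},X)^{C=\psi=0} \]
and verify each arrow separately.

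The equivalence $\alpha$ is the one recorded at the beginning of \S \ref{sub:Frobenius kernels}. Using the identification of $F_X^*:\B G(X)\to \B G^{(p)}(X)$ with extension of structure group along $G\to G^{(p)}$ and unwinding Definition \ref{def:torsors_trivialized_absolute_Frobenius}, the category $\B G(X)^{F_X-triv}$ classifies pairs $(P,\sigma)$ with $P$ a $G$-torsor and $\sigma$ a trivialization of $P\times^{G}G^{(p)}$. Such data amounts to a reduction of structure group to the kernel $G^F=\ker(G\to G^{(p)})$: in one direction, pull back the unit section of $X\times G^{(p)}$ along $\sigma$; conversely, a $G^F$-torsor $Q$ yields $(Q\times^{G^F}G,\sigma_{can})$ where $\sigma_{can}$ comes from the canonical identification $Q\times^{G^F}G^{(p)}\simeq X\times G^{(p)}$, valid because $G^F$ acts trivially on $G^{(p)}$ (being its kernel under $G\to G^{(p)}$). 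The equivalence $\beta$ is the one flagged at the end of \S \ref{sub:Torsors trivialized along the absolute Frobenius}: the perfection hypothesis makes $\pi_{X/k}:X^{(p)}\to X$ an isomorphism, and since $F_X=\pi_{X/k}\circ F_{X/S}$ this transports the trivialization data compatibly. Finally, $\gamma$ is Frobenius descent (Theorem \ref{thm:cartier-descent-torsors}) restricted to the $2$-fibre products defining the trivialized subcategories, as already observed at the end of \S \ref{sub:Atiyah connections on trivialized bundles}.

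The only real verification is that the composite $\gamma\circ\beta\circ\alpha$ is canonically isomorphic to the functor $\B G^F(X)\to \At^{triv}(G^{(p)},X)^{C=\psi=0}$ constructed in \S \ref{sub:Frobenius kernels}, so that equivalence factor-by-factor implies equivalence of the composite. This is a formal diagram chase through the definitions, the main bookkeeping being the canonical isomorphism $F_X^*P\simeq P\times^{G}G^{(p)}$ and its compatibility with both the $2$-fibre products and the Frobenius descent equivalence. I do not anticipate a genuine obstacle; the substantive input is Theorem \ref{thm:cartier-descent-torsors}, and all other steps are essentially definitional.
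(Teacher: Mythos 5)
Your proposal is correct and follows essentially the same route as the paper: the paper's proof is a one-line reference back to \S~\ref{sub:Torsors trivialized along the absolute Frobenius}, and the chain of equivalences assembled there and in \S~\ref{sub:Frobenius kernels} is exactly your factorization $\gamma\circ\beta\circ\alpha$ (identification of $\B G^F(X)$ with $\B G(X)^{F_X-triv}$ via Lemma~\ref{lem:frobenius_pull_back_of_torsors}, the perfectness of $k$ for $\beta$, and Theorem~\ref{thm:cartier-descent-torsors} restricted to the trivialized subcategories for $\gamma$). You merely spell out the details that the paper leaves implicit.
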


\begin{proof}
This has already been observed in \S \ref{sub:Torsors trivialized along the absolute Frobenius}.
\end{proof}

\begin{remark}
\label{rem:rigidity}
In particular,   $\B G^F(X)$ is rigid, which is  also  straightforward to show directly, since $X/k$ is smooth and $G^F$ is an infinitesimal group scheme over $k$.
\end{remark}

\begin{theorem}
\label{thm:torsors_Frobenius_kernels}
Assume that $k$ is perfect. There is a natural one to one map $\h^1(X,G^F)\to  \h^0(X,\Omega^1_{X/k}\otimes_k \Lie (G))^{C=\psi=0}$ that for each $g\in G(X)$ sends the image $\partial(g)$ of $g$ via the cobord map $\partial : G^{(p)}(X)\to \h^1(X,G^F)$ to $\dd \log(g)$, commutes with étale base change, and is characterized by these properties.
\end{theorem}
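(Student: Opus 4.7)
The plan is to chain together the equivalences already established in the paper and then pin down uniqueness by an étale descent argument. First, by Lemma \ref{lem:frobenius_descent_infinitesimal} the functor $\B G^F(X) \to \At^{triv}(G^{(p)},X)^{C=\psi=0}$ is an equivalence of categories; composing with the inverse of the discrete equivalence \eqref{eq:discrete-equivalence} restricted to the vanishing-curvatures subcategory produces an equivalence $\B G^F(X) \simeq \h^0(X,\Omega^1_{X/k}\otimes_k \Lie(G^{(p)}))^{C=\psi=0}$. Passing to isomorphism classes and identifying $\Lie(G^{(p)})\simeq \Lie(G)$ via the perfectness of $k$ yields the announced bijection.

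For the two asserted properties, the $2$-commutative square of Lemma \ref{lem:frobenius_descent_maurer_cartan} says exactly that the composite $G^{(p)}(X)\xrightarrow{\partial}\h^1(X,G^F)\to \h^0(X,\Omega^1_{X/k}\otimes_k \Lie(G^{(p)}))$ coincides with $\dd\log$, which gives the value on cobords. Naturality under an étale morphism $f:X'\to X$ follows because every ingredient of the construction, namely the relative Frobenius $F_{X/S}$, Atiyah's exact sequence \eqref{eq:atiyah_s_exact_sequence}, and the formation of $\Omega^1_{X/k}$, $\Lie(G)$, and $\ad(P)$, commutes with arbitrary base change.

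For the characterisation, suppose $\varphi_1$ and $\varphi_2$ are two families of maps satisfying both properties and fix a class $[P]\in \h^1(X,G^F)$. The crucial observation is that although $\partial$ need not be surjective globally, it is \emph{étale-locally} surjective: the pushout $P\times^{G^F} G$ is a $G$-torsor on $X$, and since $G$ is smooth there exists an étale cover $f:X'\to X$ trivialising this pushout. The exact sequence associated with $1\to G^F \to G \to G^{(p)}\to 1$ then exhibits $f^*[P]=\partial(g')$ for some $g'\in G^{(p)}(X')$, so the second property forces $\varphi_1(f^*[P])=\dd\log(g')=\varphi_2(f^*[P])$. Since the presheaf $X\mapsto \h^0(X,\Omega^1_{X/k}\otimes_k \Lie(G))$ is an étale sheaf (being the global sections of a quasi-coherent module), injectivity of pullback along $f$ yields $\varphi_1([P])=\varphi_2([P])$. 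The main obstacle is precisely this uniqueness step: one must leverage the smoothness of $G$ to reduce an arbitrary class in $\h^1(X,G^F)$ to the image of $\partial$ after an étale refinement.
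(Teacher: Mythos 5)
Your proposal is correct and follows essentially the same route as the paper: chain the equivalences of Lemma \ref{lem:frobenius_descent_infinitesimal} and \eqref{eq:discrete-equivalence}, invoke Lemma \ref{lem:frobenius_descent_maurer_cartan} for the value on cobords, and deduce uniqueness from the étale-local surjectivity of $\partial$ together with the sheaf property of $1$-forms. Your explicit argument for local surjectivity (trivialising the pushout $P\times^{G^F}G$ using smoothness of $G$) is precisely the ``easy direct proof'' the paper alludes to as an alternative to citing Corollary \ref{cor:image_MC_torsors_vanishing_curvatures}.
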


\begin{proof}

	According to equation \eqref{eq:discrete-equivalence}, Lemmas \ref{lem:frobenius_descent_maurer_cartan} and \ref{lem:frobenius_descent_infinitesimal} there is a natural $2$-commutative diagram:

\[ 
\begin{tikzcd}
	\B G^F(X)	\ar[r,"\sim"] & \At^{triv}(G^{(p)},X)^{C=\psi=0}\\
	G^{(p)}(X)\ar[r,"\dd \log"]\ar[u,"\partial"] & \h^0(X,\Omega^1_{X/k}\otimes_{k}\Lie(G^{(p)}))^{C=\psi=0} \isoarrow{u,"s_e+\cdot "'}
\end{tikzcd}
\]

All categories are rigid, so we can identify $\B G^F(X)$ with $\h^1(X,G^F)$ and because the functor $s_e+\cdot$ admits an explicit inverse, we get a ($1$-)commutative diagram :

\[ 
\begin{tikzcd}
\B G^F(X)	\ar[rd,"\alpha"] &\\
	G^{(p)}(X)\ar[r,"\dd \log"]\ar[u,"\partial"] & \h^0(X,\Omega^1_{X/k}\otimes_{k}\Lie(G^{(p)}))^{C=\psi=0} \end{tikzcd}
\]
where the diagonal map $\alpha$ is one to one. The fact that the formation of $\alpha$ commutes with \'etale base change on $X$ is clear, and the unicity follows, since any element of $\B G^F(X)$ is \'etale locally in the image of $\partial$ (by Corollary \ref{cor:image_MC_torsors_vanishing_curvatures}, or an easy direct proof).

\end{proof}

\begin{remark}
\label{rem:rigidity-key}
\begin{enumerate}
	\item The rigidity of $\B G^F(X)$ is the key fact that enables, starting from $\Omega \in \h^0(X,\Omega^1_{X/k}\otimes_{k}\Lie(G^{(p)}))^{C=\psi=0}$, corresponding on an \'etale cover $(X_i\to X)_{i\in I}$ to elements $g_i\in G^{(p)}(X_i)$, to glue together the local torsors $T_i=\partial(g_i)$. Since the $T_i$'s are given by explicit equations, this gives a very concrete description of the map $\alpha^{-1}$.  
\item The assumption that $k$ is perfect is essential. Namely, in the simple case $X=\mathbb A^1$, $G=\mathbb G_m$, it is easy to see that $\h^1(X,G^F)\simeq k^*/{{k^*}}^p$ whereas  $\h^0(X,\Omega^1_{X/k}\otimes_k \Lie (G))^{C=\psi=0}$ is trivial.
\item To describe explicitly the set $\h^0(X,\Omega^1_{X/k}\otimes_k \Lie (G))^{C=\psi=0}$ is probably a difficult question, as there is for a general, non-commutative group $G$, no explicit formula (at least known to us) for the $p$-curvature. Interestingly, when $G$ is commutative, one can express the $p$-curvature $\psi$ using only the Cartier operator $\C$ and the $p$-th power operation on $\Lie(G)$, see Appendix \ref{sec:p-curvature_abelian_structure_groups}.

\end{enumerate}
 
\end{remark}

\appendix

\section{Classical Frobenius descent}
\label{sec:classical_frobenius_descent}

In this paragraph, we put $S=\spec k$. We recall Cartier's celebrated result on Frobenius descent. This is nowadays seen as fppf descent along the relative Frobenius morphism $F_{X/S}$ of $X/S$, that is the unique morphism making the following diagram commute :

\begin{equation}
\label{frobenius_diagram}
\begin{tikzcd}
	X \arrow[bend right]{ddr}{}  \arrow[dotted]{dr}{F_{X/S}} \arrow[bend left]{drr}{F_X} & & \\
 & X^{(p)} \arrow{d}{} \arrow{r}{\pi_{X/S}} & X \arrow{d}{} \\
 & S \arrow{r}{F_S} & S
\end{tikzcd}
\end{equation}
where $F_S$ (respectively $F_X$) is the absolute Frobenius of $S$ (respectively of $X$).

Let $\mathcal F$ be a quasi-coherent sheaf on $X^{(p)}$. Then $F_{X/S}^*\mathcal F= \mathcal{O}_X\otimes _{\mathcal{O}_{X^{(p)}}}\mathcal F$ is endowed with a canonical connection  $\nabla^{can}$ given by  $f\otimes s\mapsto (1\otimes s)\otimes\mathrm{d}f$. This connection is integrable with vanishing $p$-curvature.

In the other direction, if $(\mathcal E,\nabla)$ is a quasi-coherent sheaf endowed with an integrable connection $X$, of vanishing $p$-curvature, then the sub-sheaf of horizontal sections $\mathcal E^\nabla=\left\{ s\in \mathcal E / \nabla(s)=0 \right\}$ is a quasi-coherent sheaf on $X^{(p)}$.

\begin{theorem}[Cartier]
\label{thm:cartier}
Assume $X/S$ is smooth.

The functors $\mathcal F \mapsto (F_{X/S}^*\mathcal F,\nabla_{can})$ and $(\mathcal E,\nabla)\mapsto \mathcal E^\nabla$ are inverse equivalences between the category of quasi-coherent sheaves on $X^{(p)}$ and the category of quasi-coherent sheaves on $X$ endowed with an integrable connection of vanishing $p$-curvature.
\end{theorem}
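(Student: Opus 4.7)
The plan is to reduce to a local, coordinate-based computation and then exhibit the two functors as mutually inverse by a direct check. Since both functors and the statement are Zariski-local on $X$ (and $F_{X/S}$ is a universal homeomorphism, so this matches being local on $X^{(p)}$), smoothness of $X/S$ lets me assume $X=\spec A$ admits étale coordinates $t_1,\ldots,t_n$ relative to $S$. Writing $B\subset A$ for the image of $\mathcal O_{X^{(p)}}$ under $F_{X/S}^*$, the existence of coordinates gives that $A$ is free over $B$ with basis $\{t^\alpha:0\le\alpha_i<p\}$; in particular $F_{X/S}$ is finite locally free of degree $p^n$, which is the geometric engine of Frobenius descent.

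For the easy composition, given a $B$-module $N$ I would check directly that in $F_{X/S}^*N=A\otimes_B N$ with $\nabla_{can}(a\otimes n)=\dd a\otimes n$, writing any element uniquely as $\sum_\alpha t^\alpha\otimes n_\alpha$ and applying each $\partial/\partial t_j$ forces $n_\alpha=0$ for $\alpha\ne 0$; hence $(F_{X/S}^*N)^{\nabla_{can}}=1\otimes N$ naturally, which identifies the composition $N\mapsto (F_{X/S}^*N)^{\nabla_{can}}$ with the identity. For the reverse direction, given $(M,\nabla)$ with $\nabla$ integrable and $\psi=0$, the operators $\partial_i:=\nabla(\partial/\partial t_i)$ commute and satisfy $\partial_i^{p}=0$ (since $(\partial/\partial t_i)^p=0$ on $A$ and vanishing $p$-curvature transports this to $M$). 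The task reduces to showing that the multiplication map $A\otimes_B M^\nabla\to M$ is an isomorphism.

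The main technical ingredient is an explicit Taylor-type projector
\[
\pi\colon M\to M,\qquad \pi(m)=\sum_{0\le\alpha_i<p}\frac{(-t)^\alpha}{\alpha!}\,\partial^\alpha(m).
\]
A direct calculation of $\partial_j\pi(m)$ by Leibniz produces two sums which, after reindexing one by $\alpha\mapsto\alpha-e_j$, cancel exactly because $\partial_j^{p}=0$ kills the would-be boundary term at $\alpha_j=p$. Hence $\pi$ lands in $M^\nabla$ and is the identity on $M^\nabla$, giving a $B$-linear retraction of $M^\nabla\hookrightarrow M$. The main obstacle is upgrading this retraction to the full statement $A\otimes_B M^\nabla\xrightarrow{\sim}M$. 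I would argue structurally: the actions on $M$ of elements of $A$ (by multiplication) and of the $\partial_i$ satisfy Leibniz, $[\partial_i,\partial_j]=0$ and $\partial_i^{p}=0$, so $M$ is a module over the ring $D=A\langle\partial_1,\ldots,\partial_n\rangle$ cut out by these relations. The same relations hold for the $D$-module $A$; the $D$-action on $A$ is $B$-linear, faithful (by successively applying a putative kernel element to the monomials $1,t_j,\ldots,t^\beta$), and the $B$-rank of $D$ matches that of $\End_B(A)$, giving $D\xrightarrow{\sim}\End_B(A)$. This is an Azumaya $B$-algebra with progenerator $A$, and Morita equivalence then identifies $M\mapsto \Hom_D(A,M)=M^\nabla$ and $N\mapsto A\otimes_B N$ as quasi-inverse equivalences, which is precisely Cartier's theorem.
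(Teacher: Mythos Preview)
The paper does not actually prove this theorem; it simply refers to \cite[Theorem (5.1)]{katz:nilpotent}. Your argument is a correct reconstruction of the standard proof, and in fact blends two viewpoints. The explicit Taylor projector $\pi$ is exactly Katz's device; his original argument stays on that explicit side throughout and, rather than invoking Morita, uses the projectors to write down an inverse to the multiplication map $A\otimes_B M^\nabla\to M$ directly, essentially via the identity $m=\sum_\alpha (t^\alpha/\alpha!)\,\pi(\partial^\alpha m)$, which one verifies by the same reindex-and-cancel computation you used for $\partial_j\pi=0$ (the boundary terms again die because $\partial_i^{\,p}=0$). Your alternative, identifying $D\simeq\End_B(A)$ and appealing to Morita equivalence, is the more modern Azumaya-algebra perspective (as in Bezrukavnikov--Mirkovi\'c--Rumynin or Ogus--Vologodsky). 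Your route is conceptually cleaner and explains \emph{why} the equivalence holds; Katz's is more elementary and entirely self-contained.

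One small gap: the step ``injective $B$-linear map between free $B$-modules of the same rank, hence an isomorphism'' is not valid over a general commutative ring $B$. The fix is easy: your faithfulness argument (applying a putative kernel element successively to the monomials $t^\gamma$) works verbatim after base change to any residue field of $B$, so $D\to\End_B(A)$ is an isomorphism fibre by fibre; the cokernel is then a finitely generated $B$-module with vanishing fibres, hence zero by Nakayama, and the map is an isomorphism.
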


\begin{proof}
	See \cite[Theorem (5.1)]{katz:nilpotent}.
\end{proof}

\section{Frobenius pull-back of torsors}
\label{sec:frobenius_pull_back_of_torsors}

We use notations of diagram \eqref{frobenius_diagram}.

\begin{lemma}
\label{lem:frobenius_pull_back_of_torsors}
Let $G/S$ be an affine group scheme and $P\to X$ be a $G$-torsor. There exists a natural isomorphism of $G^{(p)}$-torsors:
\[ P \wedge^G G^{(p)} \simeq F_X^* P \; \; .\] 
\end{lemma}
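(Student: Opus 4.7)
The plan is to exhibit a canonical $G$-equivariant morphism $\phi: P \to F_X^*P$, where $G$ acts on the target through the relative Frobenius $F_{G/S}: G \to G^{(p)}$, and then to invoke the universal property of the contracted product together with the basic fact that a $G^{(p)}$-equivariant morphism between $G^{(p)}$-torsors on $X$ is automatically an isomorphism.

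First I would unpack the $G^{(p)}$-torsor structure on $F_X^*P$. Using the factorization $F_X = \pi_{X/S}\circ F_{X/S}$ from diagram \eqref{frobenius_diagram} and the natural identification of $\pi_{X/S}^*P$ with $P^{(p)} := P\times_{S,F_S}S$ (which is a $G^{(p)}$-torsor over $X^{(p)}$ by base change of the $G$-action along $F_S$), I view $F_X^*P$ as $F_{X/S}^*(P^{(p)})$, a $G^{(p)}$-torsor over $X$.

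The key input is then the functoriality of the relative Frobenius. Applied to the structure morphism $p: P\to X$ over $S$, it yields $F_{P/S}: P\to P^{(p)}$ covering $F_{X/S}: X\to X^{(p)}$; applied to the group-action morphism $P\times_S G\to P$, it gives the $F_{G/S}$-equivariance of $F_{P/S}$. Since $F_{P/S}$ covers $F_{X/S}$, it factors uniquely through the fiber product $F_X^*P = X\times_{X^{(p)}}P^{(p)}$, producing the desired morphism $\phi: P\to F_X^*P$ over $X$, which inherits $G$-equivariance for the action of $G$ on the target via $F_{G/S}$. The universal property of the contracted product then yields a unique $G^{(p)}$-equivariant factorization $\tilde\phi: P\wedge^G G^{(p)}\to F_X^*P$ of $\phi$, and since both sides are $G^{(p)}$-torsors on $X$, $\tilde\phi$ is automatically an isomorphism. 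Naturality in $P$ follows from the naturality of each step. The only slightly delicate point I expect is the $F_{G/S}$-equivariance of $F_{P/S}$, but this is just the naturality of the relative Frobenius applied to the action morphism $P\times_S G\to P$.
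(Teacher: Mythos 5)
Your proof is correct and takes essentially the same route as the paper: the paper directly invokes the relative Frobenius $F_{P/X}\colon P\to P^{(p/X)}=F_X^*P$ of $P$ over $X$ and its equivariance with respect to $F_G\colon G\to G^{(p)}$, and the map $\phi$ you build by factoring $F_{P/S}$ through the fibre product $X\times_{X^{(p)}}P^{(p)}$ is exactly this $F_{P/X}$. Your write-up merely makes explicit the intermediate identifications and the appeal to the universal property of the contracted product that the paper leaves implicit.
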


\begin{proof}
	Notice that $F_X^* P = X\times_{F_X,X} P=P^{(p/X)}$, moreover the relative Frobenius morphism $F_{P/X}:P\to P^{(p/X)}=F_X^* P$ is equivariant with respect to the morphism $F_{G}:G\to G^{(p)}$, hence give rise to an (iso)morphism of $G^{(p)}$-torsors $P \wedge^G G^{(p)} \simeq F_X^* P$.
\end{proof}

\section{\texorpdfstring{$p$}{p}-curvature for abelian structure groups}
\label{sec:p-curvature_abelian_structure_groups}

In this section, we assume that $ \car k \neq 2$.

Let $G/k$ be an affine, smooth, abelian group scheme. In this situation, there is an explicit description of the $p$-curvature map in terms of the Cartier operator $\C: {F_{X/S}}_*(\Z(\Omega^1_{X/k})) \to \Omega^1_{X^{(p)}/k}$.

Let us first remark that if $\Omega \in \h^0(X,\Omega^1_{X/k}\otimes_k \Lie (G))$, since $G$ is abelian, according to Lemma \ref{lem:curvature-GL}, we have that $C_{s+\Omega}=\mathrm d \Omega$. Hence the source of the $p$-curvature map can be written as 
$\h^0(X,\Z(\Omega^1_{X/k}))\otimes_k \Lie (G)$.
\begin{proposition}
\label{prop:hitchin_map_abelian}

If $G/k$ be an affine, smooth, abelian group scheme, the $p$-curvature morphism :
$$\psi: \h^0(X,\Z(\Omega^1_{X/k}))\otimes_k \Lie (G) \to
\h^0(X^{(p)},\Omega^1_{X^{(p)}/k})\otimes_k \Lie (G)$$
equals $\pi_{X/k}^*\otimes \cdot^p- \C\otimes \id$, where $\pi_{X/k}: X^{(p)} \to X$ is the canonical morphism, $\cdot^p$ is the $p$-th power map on $\Lie (G)$, and $\C$ is the Cartier operator.
\end{proposition}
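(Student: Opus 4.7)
The strategy is to compute the $p$-curvature directly on the trivial $G$-torsor $X \times G \to X$ endowed with the connection $s = s_e + \Omega$, exploit the abelianness of $G$ to collapse Jacobson's formula to a single non-trivial term, check the identity on elementary tensors $\Omega = \omega \otimes X$, and match the two resulting summands with those of the stated formula.

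Since $G$ is abelian, the Atiyah algebroid of the trivial torsor splits canonically as $p_*^{G}\mathcal T_{X\times G} \simeq \mathcal T_X \oplus (\mathcal O_X \otimes_k \Lie(G))$, with $\ad(X\times G) = \mathcal O_X \otimes_k \Lie(G)$ an abelian ideal on which $\mathcal T_X$ acts by deriving the coefficients: $[D, f \otimes Y] = D(f) \otimes Y$. Setting $E = \Omega(D) \in \ad(X\times G)$, I would prove by induction on $k \geq 1$ the identity
\[
\bigl(\ad(tD + E)\bigr)^{k}(D) \;=\; -\,t^{k-1}\,D^{k}(E),
\]
where at each step the abelianness of $\ad(X\times G)$ kills the bracket $[E, \ad(D)^{j}(E)]$ (both terms lie in the abelian ideal).

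Feeding this into Jacobson's formula $(D+E)^p = D^p + E^p + \sum_{i=1}^{p-1} s_i(D, E)$, characterized by $\sum_{i=1}^{p-1} i\, s_i(D, E)\, t^{i-1} = (\ad(tD+E))^{p-1}(D)$, only the coefficient of $t^{p-2}$ is non-zero and, using $1/(p-1) = -1$ in $\mathbb{F}_p$, yields $s_{p-1}(D, E) = D^{p-1}(E)$. Subtracting $s(D^p) = D^p + \Omega(D^p)$ then gives
\[
\psi_s(D) \;=\; \Omega(D)^{p} + D^{p-1}\bigl(\Omega(D)\bigr) - \Omega(D^p).
\]
On an elementary tensor $\Omega = \omega \otimes X$, abelianness further gives $\Omega(D)^{p} = \omega(D)^p \otimes X^{p}$, the image of $\omega \otimes X$ under $\pi_{X/k}^* \otimes \cdot^p$ after the standard Frobenius identification; and the remainder $\bigl(D^{p-1}(\omega(D)) - \omega(D^p)\bigr) \otimes X$ matches $-(\C \otimes \id)(\omega \otimes X)$ via Cartier's classical explicit formula for $\C$ (this is essentially the content of Proposition \ref{prop:line_bundles_and_1_forms} for $G = \mathbb{G}_m$, transplanted by tensoring with $X$). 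Linearity in both factors concludes.

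The main obstacle is the inductive identity $(\ad(tD + E))^k(D) = -t^{k-1} D^k(E)$: it is elementary but is precisely where the abelianness of $G$ is crucial, and its consequence (the collapse of Jacobson's polynomial to a single coefficient) must be tracked carefully with the correct sign and with the right interpretation of $(p-1)^{-1}$ modulo $p$. The identification of the surviving non-abelian remainder with the Cartier operator is, by contrast, classical and essentially a matter of reducing to the $G = \mathbb{G}_m$ formula recorded in Proposition \ref{prop:line_bundles_and_1_forms}.
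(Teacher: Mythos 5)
Your argument is correct, and it is genuinely a proof where the paper offers none: the paper disposes of this proposition by citing \cite[(A.8)]{chen-zhu:non_abelian} and \cite[Lemma (2.2)]{artin-milne:duality}, whereas you reconstruct the computation those references contain. Your inductive identity $\bigl(\ad(tD+E)\bigr)^{k}(D)=-t^{k-1}D^{k}(E)$ is right (the bracket $[E,\ad(D)^{j}(E)]$ does die in the abelian ideal), the extraction of $s_{p-1}(D,E)=D^{p-1}(E)$ from Jacobson's formula with $(p-1)^{-1}\equiv -1$ is the classical Hochschild identity $(D+E)^p=D^p+E^p+D^{p-1}(E)$, and the resulting expression $\psi_s(D)=\Omega(D)^p+D^{p-1}(\Omega(D))-\Omega(D^p)$ is the correct intermediate formula. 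Three small points to tighten. First, the reduction to elementary tensors needs the additivity of $E\mapsto E^{p}$ on the abelian ideal (again Jacobson, with all correction terms vanishing); you invoke abelianness only to compute $(\,\omega(D)\otimes Y)^{p}=\omega(D)^p\otimes Y^{[p]}$, but the same point is what justifies summing over elementary tensors, and strictly speaking the maps $\pi_{X/k}^*\otimes\cdot^p$ and $\C\otimes\id$ are only semilinear, so ``linearity in both factors'' should be ``additivity plus the matching semilinearities''. Second, the identification of the remainder $D^{p-1}(\omega(D))-\omega(D^p)$ with $-\langle\C\omega,D\rangle^{(p)}$ uses that $\omega$ is closed and is the characterization of the Cartier operator in \cite[\S 7.1--7.2]{katzAlgebraicSolutionsDifferential1972}; the paper's Proposition \ref{prop:line_bundles_and_1_forms} only records the resulting equivalence of vanishing conditions, so cite Katz directly rather than that proposition. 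Third, avoid writing $X$ for an element of $\Lie(G)$, since $X$ is the base scheme throughout.
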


\begin{proof}
	This is	\cite[(A.8)]{chen-zhu:non_abelian} (but see also \cite[Lemma (2.2)]{artin-milne:duality}).
\end{proof}

\printbibliography

\end{document}